\newcounter{numberofremark}
\newcommand\nothing[1]{}
\newcommand{\dcl}{\DeclareMathOperator}
\dcl\cdet{cdet} \dcl\Sp{Specm} \dcl\depth{depth} \dcl\im{Im} \dcl\Span{span} \dcl\Ker{Ker} \dcl\Specm{Specm}
\dcl\Supp{Supp} \dcl\codim{codim} \dcl\Y{Y} \dcl\gl{\mathfrak{gl}}    \dcl\U{U} \dcl\T{T}
\dcl\qdet{qdet} \dcl\sgn{sgn} \dcl\gr{gr} \dcl\diag{diag}
\dcl\g{\mathfrak{g}} \dcl\C{\mathbb C} \dcl\dd{{\mathrm d}}
\newcommand\sn{{\mathsf n}}
\newcommand\sm{{\mathsf m}}
\newcommand\D{\mathcal{D}}
\newcommand\Dv{\mathcal{D}^{\bar{v}}}
\newcommand\ev{\mbox{ev}(\bar{v})}
\newcommand\ov{ \bar{v}}
\newcommand\Ga{{\Gamma}}
\newlength\yStones
\newlength\xStones
\newlength\xxStones
\def\Stones{\pst@object{Stones}}
\def\Stones@i#1{%
  \pst@killglue%
  \begingroup%
  \use@par%
  \setlength\xxStones{\xStones}%
  \expandafter\Stones@ii#1,,\@nil
  \endgroup
  \global\addtolength\xStones{0.6cm}%
  \global\addtolength\yStones{-7.5mm}}%
\def\Stones@ii#1,#2,#3\@nil{%
  \rput(\xxStones,\yStones){%
    \psframebox[framesep=0]{%
      \parbox[c][6mm][c]{11mm}{\makebox[11mm]{$#1$}}}}%
  \addtolength\xxStones{1.2cm}%
  \ifx\relax#2\relax\else\Stones@ii#2,#3\@nil\fi}
\def\Stone#1{\fbox{\makebox[13mm]{\strut#1}}\kern2pt}
\newtheorem{theorem}{Theorem}[section]
\newtheorem{lemma}[theorem]{Lemma}
\newtheorem{proposition}[theorem]{Proposition}
\newtheorem{remark}[theorem]{Remark}
\newtheorem{definition}[theorem]{Definition}
\begin{document}

\title[Gelfand-Tsetlin modules of quantum $\gl_n$]{Gelfand-Tsetlin modules of quantum $\gl_n$ defined by admissible sets of relations}

\author{Vyacheslav Futorny}

\address{Instituto de Matem\'atica e Estat\'istica, Universidade de S\~ao
Paulo,  S\~ao Paulo SP, Brasil} \email{futorny@ime.usp.br,}

\author{Luis Enrique Ramirez}
\address{Universidade Federal do ABC,  Santo Andr\'e SP, Brasil} \email{luis.enrique@ufabc.edu.br,}

\author{Jian Zhang}

\address{Instituto de Matem\'atica e Estat\'istica, Universidade de S\~ao
Paulo,  S\~ao Paulo SP, Brasil} \email{zhang@ime.usp.br,}

\begin{abstract}
The purpose of this paper is to construct new families of irreducible Gelfand-Tsetlin modules for $U_q(\gl_n)$. These modules have arbitrary singularity
and  Gelfand-Tsetlin multiplicities bounded by
$2$. Most previously known irreducible modules had all  Gelfand-Tsetlin multiplicities bounded by
$1$ \cite{FRZ1}, \cite{FRZ2}. In particular, our method works for $q=1$ providing new families of irreducible Gelfand-Tsetlin modules for $\gl_n$. This generalizes the results of \cite{FGR3} and
\cite{FRZ}.

\end{abstract}

\subjclass{Primary 17B67}
\keywords{Quantum group, Gelfand-Tsetlin modules, Gelfand-Tsetlin basis, tableaux realization}
\maketitle
\section{Introduction}

 Theory of Gelfand-Tsetlin modules is originated in the classical paper of Gelfand and Tsetlin \cite{GT}, where a basis for all finite dimensional representations of $\gl_n$ was constructed consisting of eigenvectors of certain maximal commutative subalgebra of $U(\gl_n)$, a Gelfand-Tsetlin subalgebra.  Further, infinite dimensional Gelfand-Tsetlin modules for $\gl_n$
were studied  in \cite{GG}, \cite{LP1}, \cite{LP2},  \cite{DFO}, \cite{O}, \cite{FO2}, \cite{Maz1}, \cite{Maz2}, \cite{m:gtsb}, \cite{FGR1}, \cite{FGR2}, \cite{FGR3}, \cite{FGR4}, \cite{FGR5}, \cite{Za}, \cite{RZ}, \cite{Vi1}, \cite{Vi2} among the others. These representations have close connections to different concepts
 in Mathematics and Physics (cf. \cite{KW1}, \cite{KW2},  \cite{GS}, \cite{FM}, \cite{Gr1}, \cite{Gr2}, \cite{CE1}, \cite{CE2}, \cite{FO1} and references therein).

For quantum $\gl_n$
  Gelfand-Tsetlin modules were considered in \cite{MT}, \cite{FH}, \cite{FRZ1}, \cite{FRZ2}. In particular, families of irreducible  Gelfand-Tsetlin modules constructed
in \cite{FRZ1} and \cite{FRZ2} correspond to admissible sets of relations and have all  Gelfand-Tsetlin multiplicities bounded by
$1$. In general, it is difficult to construct explicitly irreducible modules that have  Gelfand-Tsetlin multiplicities bigger than $1$, even in the case of highest weight modules. Some examples can be obtained from \cite{FJMM}. In this paper we combine two methods. First,  the construction of Gelfand-Tsetlin modules out of admissible relations in \cite{FRZ2} and, second,  the construction of $1$-singular modules in \cite{FGR3}. As a result we are able to obtain new families of irreducible Gelfand-Tsetlin modules whose Gelfand-Tsetlin multiplicities bounded by
$2$ (Theorem \ref {thm-when L irred}). Moreover, these modules have arbitrary singularities. The construction is explicit with a basis and the action of the Lie algebra (Theorem \ref{thm-main}). This allows to understand the structure of the constructed modules, in particular to describe the action of the generators of the Gelfand-Tsetlin subalgebra (Theorem \ref{GT module structure}).

Our method also works when $q=1$ and allows to construct new families of irreducible modules for $\gl_n$ (Theorem \ref{thm-gl(n)}). Again, these modules allow arbitrary singularities (generalizing \cite{FGR3}) and  on the other hand have Gelfand-Tsetlin multiplicities bounded by $2$ with a non-diagonalizable action of the Gelfand-Tsetlin subalgebra (generalizing \cite{FRZ}). Moreover,  any irreducible Gelfand-Tsetlin module with a designated singularity appear as a subquotient of the constructed universal module (Theorem \ref{thm-exhaust}).

 \

 \

\noindent{\bf Acknowledgements.}  V.F. is
supported in part by  CNPq  (301320/2013-6) and by
Fapesp  (2014/09310-5).    J. Z. is supported by  Fapesp   (2015/05927-0).

\section{Preliminaries}

Let $P$ be the free $\mathbb Z$-lattice of rank n with the
canonical basis $\{\epsilon_{1},\ldots,\epsilon_{n}\}$, i.e.
$P=\bigoplus_{i=1}^{n}\mathbb Z\epsilon_{i}$, endowed with  symmetric
bilinear form
$\langle\epsilon_{i},\epsilon_{j}\rangle=\delta_{ij}$.
Let $\Pi=\{\alpha_j=\epsilon_j-\epsilon_{j+1}\ |\ j=1,2,\ldots\}$ and
$\Phi=\{\epsilon_{i}-\epsilon_{j}\ |\ 1\leq i\neq j\leq n-1\}$. Then $\Phi$ realizes the root system
of type $A_{n-1}$ with $\Phi$ a basis of simple roots.

 By $U_q$ we denote the quantum enveloping algebra of $\gl_n$.
We define $U_{q}$ as a unital associative algebra generated by $e_{i},f_{i}(1\leq i
\leq n-1)$ and $q^{h}(h\in P)$ with the
following relations:
\begin{align}
q^{0}=1,\  q^{h}q^{h'}=q^{h+h'} \quad (h,h' \in  P),\\
q^{h}e_{i}q^{-h}=q^{\langle h,\alpha_i\rangle}e_{i}  ,\\
q^{h}f_{i}q^{-h}=q^{-\langle h,\alpha_i\rangle}f_{i} ,\\
e_{i}f_{j}-f_{j}e_{i}=\delta_{ij}\frac{q^{\alpha_i}-q^{-\alpha_i}}{q-q^{-1}} ,\\
e_{i}^2e_{j}-(q+q^{-1})e_ie_je_i+e_je_{i}^2=0  \quad (|i-j|=1),\\
f_{i}^2f_{j}-(q+q^{-1})f_if_jf_i+f_jf_{i}^2=0  \quad (|i-j|=1),\\
e_{i}e_j=e_je_i,\  f_if_j=f_jf_i  \quad (|i-j|>1).
\end{align}

%

Fix the standard Cartan subalgebra  $\mathfrak h$ and the standard triangular decomposition. The weights of $U_q$ will be written as $n$-tuples $(\lambda_1,...,\lambda_n)$.

\begin{remark}[\cite{FRT}, Theorem 12]
$U_{q}$ has the following alternative presentation. It is isomorphic to the algebra generated by
$l_{ij}^{+}$, $l_{ji}^{-}$, $1\leq i \leq j \leq n$ subject to the relations:
\begin{align}
RL_1^{\pm}L_2^{\pm}=L_2^{\pm}L_1^{\pm}R\\
RL_1^{+}L_2^{-}=L_2^{-}L_1^{+}R
\end{align}
where
$R=q\sum_{i}e_{ii}\otimes e_{ii}+\sum_{i\neq j}e_{ii}\otimes e_{jj}
+(q-q^{-1})\sum_{i<j}e_{ij}\otimes e_{ji}$, $e_{ij}\in End(\mathbb{C}^n)$,
$L^{\pm}=(l_{ij}^{\pm})$, $L_1^{\pm}=L^{\pm}\otimes I$ and $L_2^{\pm}=I\otimes L^{\pm}$.
The isomorphism between this two representations is given by
\begin{align*}
l_{ii}^{\pm}=q^{\pm\epsilon_i},\ \ \ \
l_{i,i+1}^{+}=(q-q^{-1})q^{\epsilon_i}e_i,\ \ \ \ l_{i+1,i}^{-}=(q-q^{-1})f_{i}q^{\epsilon_i}.
\end{align*}
\end{remark}

For a commutative ring $R$, by ${\rm Specm}\, R$ we denote the set of maximal ideals of $R$.
For $1\leq j \leq i \leq n$, $\delta^{ij} \in {\mathbb Z}^{\frac{n(n+1)}{2}}$ is defined by  $(\delta^{ij})_{ij}=1$ and all other $(\delta^{ij})_{k\ell}$ are zero. For $i>0$ by $S_i$ we denote the $i$th symmetric group. Let $1(q)$ be the set of all complex $x$ such that $q^{x}=1$. Finally, for any complex number $x$, we set
\begin{align*}
(x)_q=\frac{q^x-1}{q-1},\quad
[x]_q=\frac{q^x-q^{-x}}{q-q^{-1}}.
\end{align*}

\subsection{Gelfand-Tsetlin modules}

Let  for $m\leqslant n$, $\mathfrak{gl}_{m}$ be the Lie subalgebra
of $\gl_n$ spanned by $\{ E_{ij}\,|\, i,j=1,\ldots,m \}$. We have the following chain
\begin{equation*}
\gl_1\subset \gl_2\subset \ldots \subset \gl_n.
\end{equation*}
If we denote by $(U_m)_q$ the quantum universal enveloping algebra of $\gl_m$. We have the following chain $(U_1)_q\subset$ $(U_2)_q\subset$ $\ldots$ $\subset
(U_n)_q$. Let $Z_{m}$ denotes the center of $(U_{m})_{q}$. The subalgebra ${\Ga}_q$ of $U_q$ generated by $\{
Z_m\,|\,m=1,\ldots, n \}$ is called the \emph{Gelfand-Tsetlin
subalgebra} of $U_q$.

\begin{theorem}[\cite{FRT}, Theorem 14]\label{generators of the quantum center}
The center of $U_{q}(\mathfrak {gl}_m)$ is generated by the following $m+1$ elements
$$c_{mk}=\sum_{\sigma,\sigma'\in S_m}(-q)^{l(\sigma)+l(\sigma')}
l_{\sigma(1),\sigma'(1)}^{+}\cdots l_{\sigma(k),\sigma'(k)}^{+}l_{\sigma(k+1),\sigma'(k+1)}^{-}\cdots l_{\sigma(m),\sigma'(m)}^{-},$$
where $0\leq k \leq m$.
\end{theorem}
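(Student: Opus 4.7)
The plan is to prove centrality and generation using the $R$-matrix presentation from the preceding remark. First I would reinterpret each $c_{mk}$ as a $q$-antisymmetric construction on an auxiliary tensor space. Let $V=\mathbb{C}^m$, let $P_\sigma\in\mathrm{End}(V^{\otimes m})$ denote permutation of tensor factors by $\sigma\in S_m$, and set $A_m=\sum_{\sigma\in S_m}(-q)^{\ell(\sigma)}P_\sigma$. A direct expansion using the formula $(L^{\pm}_1\cdots L^{\pm}_m)^{i_1\cdots i_m}_{j_1\cdots j_m}=l^{\pm}_{i_1 j_1}\cdots l^{\pm}_{i_m j_m}$ yields
$$c_{mk}\cdot(e_1\wedge\cdots\wedge e_m)=A_m L^{+}_1\cdots L^{+}_k L^{-}_{k+1}\cdots L^{-}_m(e_1\otimes\cdots\otimes e_m),$$
where $L^{\pm}_a$ means $L^{\pm}$ placed in the $a$-th slot and $e_1\wedge\cdots\wedge e_m=A_m(e_1\otimes\cdots\otimes e_m)$. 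This replaces the combinatorics of the double sum over $(\sigma,\sigma')$ by an operator identity accessible to the RLL relations.

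Next I would establish centrality. Two ingredients are needed. The \emph{absorption property} says that for every $1\le a<b\le m$ the $R$-matrix $R_{ab}$ acting in the $a$-th and $b$-th slots commutes with $A_m$, both sides being a scalar multiple of $A_m$; this follows from the braid and Hecke relations for the $R$-matrix representation of $S_m$ on $V^{\otimes m}$. The RLL relations from the remark, lifted to an auxiliary $0$-th space, give $R_{0a}L^{\pm}_0L^{\pm}_a=L^{\pm}_aL^{\pm}_0R_{0a}$ for the homogeneous case and $R_{0a}L^{+}_0L^{-}_a=L^{-}_aL^{+}_0R_{0a}$ for the mixed case, together with the partner relation obtained by swapping signs. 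Multiplying $c_{mk}$ by $L^{\pm}_0$ and transporting $L^{\pm}_0$ through the string $L^{+}_1\cdots L^{+}_kL^{-}_{k+1}\cdots L^{-}_m$ by $m$ successive applications of these relations produces the product $R_{01}\cdots R_{0m}$ on one side and its partner on the other. The absorption property slides them through $A_m$ so they cancel, giving $[L^{\pm}_0,c_{mk}]=0$ and hence $c_{mk}\in Z_m$.

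For generation I would evaluate $c_{mk}$ on the highest weight vector of a generic Verma module $M(\lambda)$, thereby computing the image $\chi(c_{mk})\in\mathbb{C}[q^{\pm\lambda_1},\ldots,q^{\pm\lambda_m}]^{S_m}$ under the Harish-Chandra homomorphism $\chi$. Only the weight-preserving diagonal part of the $R$-matrix construction contributes on such a vector, so the computation collapses to an explicit symmetric polynomial in the $q^{\pm\lambda_i}$. A Vandermonde-type change of basis identifies $\chi(c_{m0}),\ldots,\chi(c_{mm})$ with a standard generating set of the invariant ring, and injectivity of $\chi$ on $Z_m$ then forces $c_{m0},\ldots,c_{mm}$ to generate $Z_m$. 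The main obstacle will be the centrality argument at intermediate $k$, where both the homogeneous and mixed RLL relations enter at the $L^{+}$/$L^{-}$ junction; verifying that the $R_{0a}$'s produced on the two sides of the string assemble into matching products before being absorbed into $A_m$ requires the most delicate bookkeeping in the proof and is also where the relative normalisations of the two halves of $A_m$ play together.
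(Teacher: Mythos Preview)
The paper does not prove this theorem; it is quoted verbatim as Theorem~14 of \cite{FRT} and used as a black box. So there is no ``paper's own proof'' against which to compare your proposal.

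That said, your outline follows the standard $R$-matrix argument and is essentially the route taken in the original FRT paper and its descendants: realise $c_{mk}$ through the $q$-antisymmetriser $A_m$ on $V^{\otimes m}$, use the Hecke/absorption property $A_m R_{ab}=c\,A_m$ to push $R$-matrices through $A_m$, and transport an auxiliary $L^{\pm}_0$ across the string $L^{+}_1\cdots L^{+}_k L^{-}_{k+1}\cdots L^{-}_m$ via the RLL relations. Your identification of the $L^{+}/L^{-}$ junction as the delicate point is accurate. One small caution: the ``partner'' mixed relation $R_{0a}L^{-}_0L^{+}_a=L^{+}_aL^{-}_0R_{0a}$ is not a free sign swap of the stated relation $RL^{+}_1L^{-}_2=L^{-}_2L^{+}_1R$; it is obtained by conjugating the latter by the flip and using invertibility of $R$, so you should check the precise form and the side on which the $R$-products accumulate before invoking absorption. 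For the generation step, computing $\chi(c_{mk})$ on a Verma highest weight vector and matching against a basis of symmetric Laurent polynomials is again the standard approach, and the formula~(\ref{eigenvalues of gamma_mk}) later in this paper records exactly those eigenvalues, which you could use as a target for your calculation.
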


\begin{definition}
\label{definition-of-GZ-modules} A finitely generated $U_q$-module
$M$ is called a \emph{Gelfand-Tsetlin module (with respect to
$\Ga_q$)} if

\begin{equation}\label{equation-Gelfand-Tsetlin-module-def}
M=\bigoplus_{\sm\in\Sp\Ga_q}M(\sm),
\end{equation}

where $M(\sm)=\{v\in M| \sm^{k}v=0 \text{ for some }k\geq 0\}$.
\end{definition}

Note that $\Gamma_q$ is a Harish-Chandra subalgebra of $U_q$, that is for any $u\in U_q$ the $\Ga_q$-bimodule $\Ga_q u \Ga_q$ is finitely generated left and right $\Ga_q$-module (\cite{MT}, Proposition 1 and \cite{FH}, Proposition 2.8).
As a result we have the following property of Gelfand-Tsetlin modules.

\begin{lemma}\label{lem-cyclic-Gelfand-Tsetlin}
Let $\sm\in\Sp\Ga_q$ and $M$  be  a $U_q$-module generated by a nonzero element $v\in M(\sm)$. Then $M$ is a Gelfand-Tsetlin module.
\end{lemma}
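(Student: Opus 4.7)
The plan is to establish the decomposition $M=\bigoplus_{\sn\in\Specm\Ga_q}M(\sn)$ by showing that every element of $M$ lies in a \emph{finite} sum of generalized weight spaces for $\Ga_q$. Since $M=U_q v$ is cyclic, it suffices to prove the following claim: for every $u\in U_q$ the $\Ga_q$-submodule $\Ga_q(uv)$ of $M$ is finite-dimensional over $\mathbb{C}$. Once this is done, a finite-dimensional module over a commutative $\mathbb{C}$-algebra decomposes as a direct sum of generalized eigenspaces for its maximal ideals, placing $uv$ inside a finite direct sum $\bigoplus_{\sn\in S_u}M(\sn)$; since distinct generalized weight spaces for a commutative algebra action meet trivially, summing over $u\in U_q$ yields the decomposition (\ref{equation-Gelfand-Tsetlin-module-def}), and as $M$ is also finitely generated (by $v$) it is then a Gelfand-Tsetlin module.

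To prove the claim I would first observe that $\Ga_q v$ itself is finite-dimensional. Indeed $\Ga_q v\cong\Ga_q/I$ where $I$ is the annihilator of $v$ in $\Ga_q$, and by hypothesis $I\supseteq\sm^k$. The Gelfand-Tsetlin subalgebra $\Ga_q$ is a finitely generated commutative $\mathbb{C}$-algebra (generated by the central elements $c_{mk}$ of Theorem \ref{generators of the quantum center} for $m=1,\dots,n$), so by the Nullstellensatz $\Ga_q/\sm\cong\mathbb{C}$, and an induction on $k$ using $\dim_\mathbb{C}(\sm^{j}/\sm^{j+1})<\infty$ gives $\dim_\mathbb{C}(\Ga_q/\sm^k)<\infty$, hence $\dim_\mathbb{C}(\Ga_q v)<\infty$.

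Next I would invoke the Harish-Chandra property of $\Ga_q$ recalled just before the lemma: the bimodule $\Ga_q u\Ga_q$ is finitely generated as a right $\Ga_q$-module, so there exist $w_1,\dots,w_N\in U_q$ with $\Ga_q u\Ga_q=\sum_{i=1}^N w_i\Ga_q$. Acting on $v$ gives
$$\Ga_q(uv)\;\subseteq\;(\Ga_q u\Ga_q)\cdot v\;=\;\sum_{i=1}^N w_i\cdot(\Ga_q v),$$
a finite sum of images of the finite-dimensional space $\Ga_q v$ under the $\mathbb{C}$-linear operators $w_i$. Hence $\Ga_q(uv)$ is finite-dimensional, proving the claim. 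The main technical point is the first step, the finite-dimensionality of $\Ga_q v$, which depends on $\Ga_q$ being a finitely generated commutative $\mathbb{C}$-algebra with residue fields $\mathbb{C}$; once this is in place the Harish-Chandra property finishes the argument essentially mechanically.
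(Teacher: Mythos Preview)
Your proof is correct and follows essentially the same approach as the paper's: both use that $\Ga_q v$ is finite-dimensional (from $\sm^k v=0$) together with the Harish-Chandra property to conclude that each $uv$ lies in a finite-dimensional $\Ga_q$-stable subspace, hence in a finite sum of generalized weight spaces. Your version is packaged a bit more cleanly---you show $\Ga_q(uv)$ is finite-dimensional in one stroke via $\Ga_q u\Ga_q=\sum_i w_i\Ga_q$, whereas the paper argues generator-by-generator that each $z_j\in\Ga_q$ satisfies a polynomial on $uv$---but the substance is the same.
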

\begin{proof}
Let $M'=\oplus_{\sm\in\Sp\Ga_q} M(\sn)$ be the largest Gelfand-Tsetlin $U_q$-submodule of $M$. We will show that $M'=M$. Indeed, consider any nonzero $u\in U_q$ and apply to $v$. We need to show that $uv\in M'$. Take any nonzero $z\in \Ga_q$. Since $\Ga_q$ is a Harish-Chandra subalgebra and since $\Ga_q v$ is finite dimensional there exists $N$ such that
$$uv, zuv, \ldots,  z^N uv$$ are linearly dependent. Hence, the subspace spanned by $\{uv, zuv, \ldots,  z^{N-1} uv\}$ is $z$-invariant and $\Pi_{i\in I}(z-\gamma_i(z))^N uv=0$ for some scalars $\gamma_i(z)$ and some set $I$. Choose some generators $z_1, \ldots, z_d$ of $\Ga_q$ and
define such scalars $\gamma_{i}(z_j)$, $i\in I_j$ for each generator $z_j$, $j=1, \ldots, d$.
For each element $\bar{i}=\{i_1, i_2, \ldots, \}$ of $I_1 \times I_2 \times \ldots \times I_d$
consider $\sn_{\bar{i}}\in\Sp\Ga_q$ which contains $z_j- \gamma_{i_j}$ for each $j=1, \ldots, d$. Then $$uv\in \sum_{\bar{i}\in I_1 \times I_2 \times \ldots \times I_d} M(\sn_{\bar{I}}),$$ which proves the lemma.
\end{proof}

\section{Gelfand-Tsetlin modules defined by admissible sets of relations}\label{section modules with relations}
In this section we recall the construction of $U_{q}$-modules from \cite{FRZ2}. As particular cases of this construction one obtains  any irreducible finite dimensional module as in \cite{UST2}, Theorem 2.11, and generic modules as in \cite{MT}, Theorem 2.

\begin{definition} For a vector $v=(v_{ij})$ in $\mathbb{C}^{\frac{n(n+1)}{2}}$, by $T(v)$ we will denote the following array with entries $\{v_{ij}:1\leq j\leq i\leq n\}$
\begin{center}

\Stone{\mbox{ \scriptsize {$v_{n1}$}}}\Stone{\mbox{ \scriptsize {$v_{n2}$}}}\hspace{1cm} $\cdots$ \hspace{1cm} \Stone{\mbox{ \scriptsize {$v_{n,n-1}$}}}\Stone{\mbox{ \scriptsize {$v_{nn}$}}}\\[0.2pt]
\Stone{\mbox{ \scriptsize {$v_{n-1,1}$}}}\hspace{1.5cm} $\cdots$ \hspace{1.5cm} \Stone{\mbox{ \tiny {$v_{n-1,n-1}$}}}\\[0.3cm]
\hspace{0.2cm}$\cdots$ \hspace{0.8cm} $\cdots$ \hspace{0.8cm} $\cdots$\\[0.3cm]
\Stone{\mbox{ \scriptsize {$v_{21}$}}}\Stone{\mbox{ \scriptsize {$v_{22}$}}}\\[0.2pt]
\Stone{\mbox{ \scriptsize {$v_{11}$}}}\\
\medskip
\end{center}
such an array will be called a \emph{Gelfand-Tsetlin tableau} of height $n$.
\end{definition}
Associated with any Gelfand-Tsetlin tableau $T(v)$, by $V(T(v))$ we will denote the $\mathbb{C}$-vector space spanned by the set of Gelfand-Tsetlin tableaux $\mathcal{B}(T(v)):=\{T(v+z)\ |\ z\in\mathbb{Z}^{\frac{n(n-1)}{2}}\}$. For certain subsets $B$ of $\mathcal{B}(T(v))$ and respectively the $\mathbb{C}$-subspaces $V_B(T(v))$ it is possible to define a $U_q$-module structure on $V_B(T(v))$ by the \emph{Gelfand-Tsetlin formulae}:
\begin{equation}\label{Gelfand-Tsetlin formulas}
\begin{split}
q^{\epsilon_{k}}(T(L))&=q^{a_k}T(L),\quad a_k=\sum_{i=1}^{k}l_{k,i}-\sum_{i=1}^{k-1}l_{k-1,i}+k,\ k=1,\ldots,n,\\
e_{k}(T(L))&=-\sum_{j=1}^{k}
\frac{\prod_{i} [l_{k+ 1,i}-l_{k,j}]_q}{\prod_{i\neq j} [l_{k,i}-l_{k,j}]_q}
T(L+\delta^{kj}),\\
f_{k}(T(L))&=\sum_{j=1}^{k}\frac{\prod_{i} [l_{k-1,i}-l_{k,j}]_q}{\prod_{i\neq j} [l_{k,i}-l_{k,j}]_q}T(L-\delta^{kj}),\\
\end{split}
\end{equation}

for all $L\in B$.

\subsection{Admissible sets of relations}

Set $\mathfrak{V}:=\{(i,j)\ |\ 1\leq j\leq i\leq n\}$. We will consider relations between elements of $\mathfrak{V}$ of the form  $(i,j)\geq (s,t)$ or $(i,j)>(s,t)$. More precisely, we will consider subsets of the following set of relations:
\begin{align}
\mathcal{R} &:=\mathcal{R}^{\geq} \cup \mathcal{R}^{>}\cup \mathcal{R}^{0},
\end{align}
where
\begin{align}
\mathcal{R^{\geq}} &:=\{(i,j)\geq(i-1,j')\ |\ 1\leq j\leq i\leq n,\ 1\leq j'\leq i-1\},\\
\mathcal{R^{>}} &:=\{(i-1,j')>(i,j)\ |\ 1\leq j\leq i\leq n,\ 1\leq j'\leq i-1\},\\
\mathcal{R}^{0} &:=\{(n,i)\geq(n,j)\ |\ 1\leq i\neq j\leq n\}.
\end{align}


Let $\mathcal{C}$ be a subset of $\mathcal{R}$.
Denote by $\mathfrak{V}(\mathcal{C})$ the set of all $(i,j)$ in $\mathfrak{V}$ such that $(i,j)\geq (\text{respectively }>,\ \leq,\ <)  \ (r, s)\in\mathcal{C}$ for some $(r, s)$.
Let $\mathcal{C}_1$ and $\mathcal{C}_2$ be two subsets of $\mathcal{C}$.
We say that $\mathcal{C}_1$ and $\mathcal{C}_2$ are {\it disconnected}
if
$\mathfrak{V}(\mathcal{C}_1)\cap\mathfrak{V}(\mathcal{C}_2)=\emptyset$,
 otherwise  $\mathcal{C}_1$ and $\mathcal{C}_2$ are connected,
 $\mathcal{C}$ is called {\it decomposable} if it can be decomposed into the  union of two disconnected subsets of $\mathcal{R}$, otherwise $\mathcal{C}$ is called indecomposable.

\begin{definition}
Let $\mathcal{C}$ be any subset of $\mathcal{R}$. Given $(i,j),\ (r,s)\in \mathfrak{V}(\mathcal{C})$ we will write:
\begin{itemize}
\item[(i)] $(i,j)\succeq_{\mathcal{C}} (r,s)$ if, there exists $\{(i_{1},j_{1}),\ldots,(i_{m},j_{m})\}\subseteq \mathfrak{V}(\mathcal{C})$ such that
\begin{align}\label{geq sub C}
\{(i,j)\geq (i_{1},j_1),\  (i_{1},j_1)\geq (i_2,j_2), \cdots,\  (i_{m},j_m)\geq (r,s)\}&\subseteq \mathcal{C}
\end{align}
\item[(ii)] We write $(i,j)\succ_{\mathcal{C}} (r,s)$ if there exists $\{(i_{1},j_{1}),\ldots,(i_{m},j_{m})\}\subseteq \mathfrak{V}(\mathcal{C})$ such that in the condition (\ref{geq sub C}), at least one of the inequalities is $>$.
\end{itemize}
We will say that $(i,j),\ (r,s)$ are $\mathcal{C}$-related if $(i,j)\succeq_{\mathcal{C}} (r,s)$ or $(r,s)\succeq_{\mathcal{C}} (i,j)$. Given another set of relations $\mathcal{C}'$, we say that {\it\bf  $\mathcal{C}$   implies $\mathcal{C}'$} if whenever we have $(i,j)\succ_{\mathcal{C}'} (r,s)$ (respectively $(i,j)\succeq_{\mathcal{C}'} (r,s)$) we also have $(i,j)\succ_{\mathcal{C}} (r,s)$ (respectively $(i,j)\succeq_{\mathcal{C}} (r,s)$). A subset of $\mathcal{R}$ of the form $\{(k,i)\geq (k-1,t),\ (k-1,s)>(k,j)\}$ with $i<j$ and $s<t$, is called a \emph{\bf cross}.
\end{definition}

Now we define the sets of relations of our interest.

\begin{definition}
Let $\mathcal{C}$ be an indecomposable set. We say that $\mathcal{C} $ is {\it \bf admissible}  if it satisfies the following conditions:
\begin{itemize}
\item[(i)] For any $1\leq k-1\leq n$, we have $(k, i)\succ_{\mathcal{C}}(k, j)$ only if $i<j$;
\item[(ii)]  $(n,i)\succeq_{\mathcal{C}} (n,j)$ only if $i<j$;
\item[(iii)] There is no cross in $\mathcal{C} $;
\item[(iv)] For every $(k,i),\ (k,j)\in\mathfrak{V}(\mathcal{C})$ with  $1\leq k\leq n-1$ there exists $s,t$ such that one of the following holds
\begin{equation}\label{condition for admissible}
\begin{split}
 &\{(k,i)>(k+1,s)\geq (k,j),\ (k,i)\geq (k-1,t)>(k,j)\}\subseteq \mathcal{C},\\
 &\{(k,i)>(k+1,s),(k+1,t)\geq (k,j)\}\subseteq  \mathcal{C},\  s<t.
  \end{split}
\end{equation}
\end{itemize}

An arbitrary set $\mathcal{C}$ is admissible if every  indecomposable subset of $\mathcal{C}$ is admissible.
\end{definition}

Denote by $\mathfrak{F}$ the set of all indecomposable admissible subsets. Recall that $1(q):=\{x\in \mathbb{C}\ |\ q^x=1\}$.
 \begin{definition} Let $\mathcal{C}$ be any subset of $\mathcal{R}$ and $T(L)$ any Gelfand-Tsetlin tableau.
 \begin{itemize}
\item[(i)]
We say that $T(L)$ satisfies a relation $(i,j)\geq (r,s)$ (respectively, $(i,j)> (r,s)$) if $l_{ij}-l_{st}\in \mathbb{Z}_{\geq 0}+\frac{1(q)}{2}\ (\text{respectively, }l_{ij}-l_{st}\in \mathbb{Z}_{> 0}+\frac{1(q)}{2})$.
\item[(ii)]
We say that a Gelfand-Tsetlin tableau $T(L)$ \emph{satisfies $\mathcal{C}$} if $T(L)$ satisfies all the relations in $\mathcal{C}$
and $l_{ki}-l_{kj}\in \mathbb{Z}+\frac{1(q)}{2}$ only if $(k,i)$ and $(k,j)$
in the same indecomposable subset of $\mathfrak{V}(\mathcal{C})$. In this case we call $T(L)$ a $\mathcal{C}$-\emph{\bf realization}.
\item[(iii)] $\mathcal{C}$ is a \emph{\bf maximal} set of relations for $T(L)$ if $T(L)$ satisfies $\mathcal{C}$ and whenever $T(L)$ satisfies a set of relations $\mathcal{C}'$ we have that $\mathcal{C}$ implies $\mathcal{C}'$.
\item[(iv)] If $T(L)$  satisfies  $\mathcal{C}$ we denote by ${\mathcal B}_{\mathcal{C}}(T(L))$  the subset of ${\mathcal B}(T(L))$ of all Gelfand-Tsetlin tableaux satisfying $\mathcal{C}$, and by $V_{\mathcal{C}}(T(L))$ the complex vector space spanned by ${\mathcal B}_{\mathcal{C}}(T(L))$.
\end{itemize}
\end{definition}

Set
\begin{equation}\label{c-for1}
e_{ki}(L)=\left\{
\begin{array}{cc}
0,& \text{ if } T(L)\notin  \mathcal {B}_{\mathcal{C}}(T(L))\\
-\frac{\prod_{j=1}^{k+1}[l_{ki}-l_{k+1,j}]_q}{\prod_{j\neq i}^{k}[l_{ki}-l_{kj}]_q},& \text{ if } T(L)\in  \mathcal {B}_{\mathcal{C}}(T(L))
\end{array}
\right.
\end{equation}

\begin{equation}\label{c-for2}
f_{ki}(L)=\left\{
\begin{array}{cc}
0,& \text{ if } T(L)\notin  \mathcal {B}_{\mathcal{C}}(T(L))\\
\frac{\prod_{j=1}^{k-1}[l_{ki}-l_{k-1,j}]_q}{\prod_{j\neq i}^{k}[l_{ki}-l_{kj}]_q},& \text{ if } T(L)\in  \mathcal {B}_{\mathcal{C}}(T(L))
\end{array}
\right.
\end{equation}

\begin{equation}\label{c-for3}
h_{k}(L)=\left\{
\begin{array}{cc}
0,& \text{ if } T(L)\notin  \mathcal {B}_{\mathcal{C}}(T(L))\\
q^{2\sum_{i=1}^{k}l_{ki}-\sum_{i=1}^{k-1}l_{k-1,i}-\sum_{i=1}^{k+1}l_{k+1,i} -1},& \text{ if } T(L)\in\mathcal {B}_{\mathcal{C}}(T(L))
\end{array}
\right.
\end{equation}



\begin{definition}
Let $\mathcal{C}$ be a subset of $\mathcal{R}$. We call $\mathcal{C}$  \emph{\bf realizable} if for any tableau $T(L)$ satisfying  $\mathcal{C}$,  the vector space $V_{\mathcal{C}}(T(L))$  has a structure of a $U_q$-module, endowed with the action of $U_q$ given by the Gelfand-Tsetlin formulas (\ref{Gelfand-Tsetlin formulas}) with coefficients given by (\ref{c-for1}), (\ref{c-for2}) and (\ref{c-for3}).
\end{definition}


\begin{theorem}\label{sufficiency of admissible}[\cite{FRZ2} Theorem 3.9 and Theorem 4.1] If $\mathcal{C}$ is a union of disconnected sets from $\mathfrak{F}$ then $\mathcal{C}$ is realizable. Moreover, $V_{\mathcal{C}}(T(L))$ is a Gelfand-Tsetlin module with the generator $c_{mk}$ of $\Gamma_q$ acting on $T(L)\in\mathcal{B}(T(v))$ as multiplication by
\begin{equation}\label{eigenvalues of gamma_mk}
\gamma_{mk}(L)=(k)_{q^{-2}}!(m-k)_{q^{-2}}!q^{k(k+1)+\frac{m(m-3)}{2}}\sum_{\tau}
q^{\sum_{i=1}^{k}l_{m\tau(i)}-\sum_{i=k+1}^{m}l_{m\tau(i)}}
\end{equation}
where $\tau\in S_m$ is such that $\tau(1)<\cdots<\tau(k), \tau(k+1)<\cdots<\tau(m)$.

\end{theorem}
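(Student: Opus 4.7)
The plan is to handle realizability and the explicit $\Ga_q$-action separately. Since the Gelfand-Tsetlin formulas (\ref{Gelfand-Tsetlin formulas}) couple only entries in three consecutive rows, and since the hypothesis makes $\mathfrak{V}(\mathcal{C})$ a disjoint union of the sets $\mathfrak{V}(\mathcal{C}_i)$ over indecomposable summands $\mathcal{C}_i\in\mathfrak{F}$, both claims reduce to the indecomposable case, which I treat below.

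The central technical step is the boundary-vanishing property: whenever $T(L)\in\mathcal{B}_\mathcal{C}(T(v))$ but $T(L+\delta^{ki})\notin \mathcal{B}_\mathcal{C}(T(v))$, the coefficient $e_{ki}(L)$ of (\ref{c-for1}) vanishes automatically as a consequence of the Gelfand-Tsetlin formula itself, and dually for $f_{ki}(L)$. This is precisely the purpose of the admissibility conditions: if the shift $\delta^{ki}$ destroys a relation of $\mathcal{C}$, then condition (iv) in (\ref{condition for admissible}) together with the absence of crosses in (iii) forces an equality of the form $l_{k+1,s}=l_{ki}$ (or $l_{k-1,t}=l_{ki}$ in the $f_{ki}$ case), producing a factor $[0]_q=0$ in the numerator. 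Once this boundary-vanishing lemma is in place, I would verify the $U_q$-relations on $\mathcal{B}_\mathcal{C}(T(v))$ by comparison with the generic case: on the Zariski-open locus where all differences $l_{ki}-l_{kj}$ avoid $\bZ+\frac{1(q)}{2}$ the truncated formulas agree with the generic Gelfand-Tsetlin action of \cite{MT}, Theorem 2; clearing denominators yields a polynomial identity in the $l_{ki}$ which then holds on the full parameter space by density. Specialising to a fixed $T(L)\in\mathcal{B}_\mathcal{C}(T(v))$ and using the boundary-vanishing lemma to discard the terms suppressed by the $\mathcal{C}$-truncation then finishes realizability.

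For the $\Ga_q$-action, by Theorem \ref{generators of the quantum center} each generator $c_{mk}$ lies in the center of $(U_m)_q$ and commutes with $q^{\epsilon_j}$ for $j\le m$; it therefore preserves every weight line $\mathbb{C}\cdot T(L)$. Hence $c_{mk}$ acts on $T(L)$ by a scalar, which must be a symmetric polynomial in $l_{m1},\dots,l_{mm}$. I would pin down this polynomial by computing the Harish-Chandra image of $c_{mk}$ in the generic case, where it is known to coincide with the right-hand side of (\ref{eigenvalues of gamma_mk}) (compare \cite{FRT}, \cite{MT}), and then transport the identity to every $T(L)\in\mathcal{B}_\mathcal{C}(T(v))$ via the same specialisation/Zariski-density argument. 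Finally, each cyclic submodule $U_q\cdot T(L)\subseteq V_\mathcal{C}(T(v))$ is a Gelfand-Tsetlin module by Lemma \ref{lem-cyclic-Gelfand-Tsetlin}, so $V_\mathcal{C}(T(v))$ itself decomposes as in (\ref{equation-Gelfand-Tsetlin-module-def}). The main obstacle is the combinatorial verification of the boundary-vanishing lemma: one must show that every way of leaving $\mathcal{B}_\mathcal{C}(T(v))$ by a single-box move is matched by a cancelling numerator zero, and it is here that conditions (iii) and (iv) enter in an essential, nonobvious way.
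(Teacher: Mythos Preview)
The paper does not prove this theorem at all: it is stated as a citation of \cite{FRZ2}, Theorems~3.9 and~4.1, and no argument is given here. So there is no ``paper's own proof'' to compare your proposal against; the result is imported wholesale from the companion paper.

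That said, your outline is a plausible reconstruction of how such a result is typically proved (and is in the spirit of the $\gl_n$ analogue in \cite{FRZ}): reduce to indecomposable $\mathcal{C}$, establish a boundary-vanishing lemma showing that whenever a single shift $\pm\delta^{ki}$ leaves $\mathcal{B}_\mathcal{C}(T(v))$ the corresponding Gelfand--Tsetlin coefficient already vanishes, and then deduce the $U_q$-relations on $V_\mathcal{C}(T(v))$ from the generic case by a density/specialisation argument. You are also right that the admissibility conditions~(iii) and~(iv) are exactly what make the boundary-vanishing work, and that this is the nontrivial combinatorial core.

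One point in your sketch needs tightening. You argue that $c_{mk}$ preserves the line $\mathbb{C}\cdot T(L)$ because it commutes with $q^{\epsilon_j}$ for $j\le m$. Commutation with the $q^{\epsilon_j}$ only gives preservation of the $(U_m)_q$-weight space, and distinct tableaux can share the same weight. The clean way to conclude that $c_{mk}$ acts by the scalar $\gamma_{mk}(L)$ is to compute it directly in the generic module (where the formula~\eqref{eigenvalues of gamma_mk} is known from \cite{FRT}, \cite{MT}) and then specialise, exactly as you do for the defining relations; the weight-line argument is not needed and, as stated, does not quite close. With that adjustment your plan is sound as a proof strategy, though the actual verification of the boundary-vanishing lemma is where the real work lies.
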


\begin{remark}
The sets $\mathcal{S} :=\{(i+1,j)\geq(i,j)>(i+1,j+1)\ |\ 1\leq j\leq i\leq n-1\}$ and $\emptyset$ are indecomposable sets of relations. By Theorem 2.11 in \cite{UST2}, any irreducible finite dimensional $U_{q}$-module is isomorphic to $V_{\mathcal{S}}(T(v))$ for some $v$. The family of modules $\{V_{\mathcal{\emptyset}}(T(v))\}$ coincide with the family of generic modules as constructed in \cite{MT}, Theorem 2.
\end{remark}

\section{New Gelfand-Tsetlin modules} \label{sec-der}
In \cite{FGR3} the classical Gelfand-Tsetlin formulas were generalized allowing to construct a new family of $\mathfrak{gl}_n$-modules associated with tableaux with at most one singular pair. In this section we will use this approach and combine with the ideas of Section \ref{section modules with relations}  to construct new families of $U_{q}$-modules.

\begin{definition}
A Gelfand-Tsetlin tableu $T(v)$ will be called {\bf $\mathcal{C}$-singular} if:
\begin{itemize}
\item[(i)] $T(v)$ satisfies $\mathcal{C}$.
\item[(ii)] There exists $1\leq s<t\leq r\leq n-1$ such that $\mathfrak{V}(\mathcal{C})\cap\{(r,s),(r,t)\}=\emptyset$ and  $v_{rs}-v_{rt}\in\frac{1(q)}{2}+\mathbb{Z}$.
\end{itemize}
The tableau $T(v)$ will be called  {\bf ($1$, $\mathcal{C}$)-singular} if it is \emph{$\mathcal{C}$-singular} and the tuple $(r,s,t)$ is unique. The tableau will be called {\bf $\mathcal{C}$-generic} if it is not $\mathcal{C}$-singular.
\end{definition}
From now on we will fix an admissible set of relations $\mathcal{C}$, $T(\bar{v})$ a $(1,\mathcal{C})$-singular tableau, $(i,j,k)$ such that $\bar{v}_{ki}=\bar{v}_{kj}$ with $1\leq i < j \leq k\leq n-1$ and by $\tau$ we denote  the element in $S_{n-1} \times\cdots \times S_{1}$ such that $\tau[k]$ is the transposition $(i,j)$  and all other $\tau[t]$ are $\mbox{Id}$.

By ${\mathcal H}$ we denote the hyperplanes $v_{ki} - v_{kj}\in  \frac{1(q)}{2}+\mathbb{Z}$ in ${\mathbb C}^{\frac{n(n-1)}{2}}$ and let $\overline{\mathcal H}$ be the subset of all $v$ in ${\mathbb C}^{\frac{n(n-1)}{2}}$ such that $v_{tr} \neq v_{ts}$ for all triples $(t,r,s)$ except for $(t,r,s) = (k,i,j)$. By ${\mathcal F}_{ij}$  the space of all rational functions that are smooth on $\overline{\mathcal H}$.

We impose the conditions  $T(\bar{v} + z) = T(\bar{v} +\tau(z))$ on the corresponding tableaux and formally introduce  new tableaux  ${\mathcal D} T({\bar{v}} + z)$ for  every $z \in {\mathbb Z}^{\frac{n(n-1)}{2}}$ subject to the relations ${\mathcal D} T({\bar{v}} + z) + {\mathcal D} T({\bar{v}} + \tau(z)) = 0$. We call ${\mathcal D} T(u)$  {\it the derivative Gelfand-Tsetlin tableau} associated with $u$.

\begin{definition}
We set  $V_{\mathcal{C}}(T(\bar{v}))$ to be the $\mathbb{C}$-vector space spanned by the set of tableaux $\{ T(\bar{v} + z), \,\mathcal{D} T(\bar{v} + z) \; | \; T(\bar{v} + z)\in\mathcal{B}_{\mathcal{C}}(T(\bar{v}))\}.$ A basis of $V_{\mathcal{C}}(T(\bar{v}))$ is for example the set
$$
\{ T(\bar{v} + z), \mathcal{D} T(\bar{v} + z') \; | \; T(\bar{v} + z),\ T(\bar{v} + z')\in\mathcal{B}_{\mathcal{C}}(T(\bar{v})), \text{ and } z_{ki} \leq z_{kj}, z'_{ki} > z'_{kj}\}.
$$
\end{definition}

In order to define the action of the generators of $U_q$ on $V_{\mathcal{C}}T(\bar{v})$ we first note that for any $\mathcal{C}$-generic vector $v'$ such that $\mathcal{C}$ is a maximal set of relations for $T(v')$, $V_{\mathcal{C}}(T(v'))$ has a structure of  an irreducible $U_{q}$-module. If $v$ denotes the vector with entries $v_{rs}=v'_{rs}$ if $(r,s)\neq (k,i), (k,j)$ and $v_{ki}=x$, $v_{kj}=y$ variables, then $V_{\mathcal{C}}(T(v))$ is a $U_{q}$-module over $\mathbb{C}(x,y)$ with action of the generators given by the formulas (\ref{Gelfand-Tsetlin formulas}). From now on, by $v$ we will denote one such vector.

Define linear map  $\mathcal{D}^{\bar{v}}:  {\mathcal F}_{ij} \otimes V_{\mathcal{C}}(T(v)) \to  V_{\mathcal{C}}(T(\bar{v}))   $ by
$$
\mathcal{D}^{\bar{v}} (f T(v+z)) = \mathcal{D}^{\bar{v}} (f) T(\bar{v}+z) +   f(\bar{v}) \mathcal{D} T(\bar{v}+z),
$$
where $\mathcal{D}^{\bar{v}}(f) = \frac{q-q^{-1}}{4\ln q}\left(\frac{\partial f}{\partial x}-\frac{\partial f}{\partial y}\right)(\bar{v})$. In particular,
$\mathcal{D}^{\bar{v}} ([x-y]_qT(v+z)) = T(\bar{v}+z)$,
 $\mathcal{D}^{\bar{v}} (T(v+z)) = \mathcal{D} (T(\bar{v}+z))$.

\subsection{Module structure on $V_{\mathcal{C}}(T(\bar{v}))$}

We define the action of $e_r,f_r,q^{\epsilon_r}(r=1,2,\ldots,n)$ on the generators of $V_{\mathcal{C}}(T(\bar{v}))$ as follows:
\begin{align*}
g(T(\bar{v} + z))=&\  \mathcal{D}^{\bar{v}}([x - y]_qg(T(v + z)))\\
g(\mathcal{D}T(\bar{v} + z')))=&\ \mathcal{D}^{\bar{v}} ( g(T(v + z'))),
\end{align*}
where $z, z' \in {\mathbb Z}^{\frac{n(n-1)}{2}}$ with $z' \neq \tau(z')$,
$g\in \{e_r,f_r,q^{\epsilon_r}(r=1,2,\ldots,n)\}$. One should note that $[x - y]_qg(T(v + z))$ and $g(T(v + z'))$ are in $ {\mathcal F}_{ij} \otimes V_{\mathcal{C}}(T(v))$, so the right hand sides in the above formulas are well defined.

\begin{proposition} \label{compatible} Let $z \in {\mathbb Z}^{\frac{n(n-1)}{2}}$ and
$g\in \{e_r,f_r,q^{\epsilon_r}(r=1,2,\ldots,n)\}$.
\begin{itemize}
\item[(i)] $ {\mathcal D}^{\bar{v}} ([x-y]_q g T(v+z)) =  {\mathcal D}^{\bar{v}} ([x-y]_q g T(v+\tau(z))) $ for all $z$.
\item[(ii)] $ {\mathcal D}^{\ov}(g T(v+z)) =  - {\mathcal D}^{\bar{v}} (g T(v+\tau(z))) $ for all $z$ such that $\tau(z) \neq z$.
\end{itemize}
\end{proposition}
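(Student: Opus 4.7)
The plan is to exploit a $\mathbb{Z}/2$-symmetry of the Gelfand-Tsetlin formulas under the transposition $\tau=(i,j)$ at row $k$. Let $\sigma$ denote the involution of $\mathbb{C}(x,y)$ that swaps $x=v_{ki}$ with $y=v_{kj}$. For a generator $g\in\{e_r,f_r,q^{\epsilon_r}\}$, write
\[
g\,T(v+z)=\sum_\alpha P^g_\alpha(v+z)\,T(v+z+\delta^g_\alpha),
\]
where the $P^g_\alpha$ are the rational functions prescribed by (\ref{Gelfand-Tsetlin formulas}). The first step is the covariance identity
\[
g\,T(v+\tau(z))=\sum_\alpha \sigma(P^g_\alpha)(v+z)\,T(v+\tau(z+\delta^g_\alpha)),
\]
verified by inspection of each generator. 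When $r\notin\{k-1,k,k+1\}$ the $P^g_\alpha$ are independent of $x,y$ and $\tau$ fixes $\delta^g_\alpha$, so the identity is immediate. For $r\in\{k-1,k+1\}$ the $P^g_\alpha$ are symmetric polynomials in $l_{k,1},\ldots,l_{k,k}$, and symmetry together with the trivial $\tau$-action on $\delta^g_\alpha$ yields the formula directly. The delicate case is $g\in\{e_k,f_k\}$, where the summation index $\alpha=p$ labels row $k$; here one shows $P^g_p(v+\tau(z))=\sigma(P^g_{(ij)(p)})(v+z)$ by direct computation with the Gelfand-Tsetlin coefficients, and reindexes using the shift identity $\tau(z)+\delta^{k,(ij)(p)}=\tau(z+\delta^{kp})$ to put the result in the stated form.

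Next I would apply $\Dv$ termwise. The computational inputs are $\Dv([x-y]_q)=1$, $[x-y]_q(\bar{v})=0$, the product-rule consequences $\Dv([x-y]_q f\,T(v+w))=f(\bar{v})\,T(\bar{v}+w)$ and $\Dv(f\,T(v+w))=\Dv(f)\,T(\bar{v}+w)+f(\bar{v})\,\mathcal{D}T(\bar{v}+w)$ for $f\in\mathcal{F}_{ij}$, the antisymmetry $\Dv(\sigma(f))=-\Dv(f)$ of the operator $\Dv=\frac{q-q^{-1}}{4\ln q}(\partial_x-\partial_y)|_{\bar{v}}$, and the imposed relations $T(\bar{v}+w)=T(\bar{v}+\tau(w))$, $\mathcal{D}T(\bar{v}+w)=-\mathcal{D}T(\bar{v}+\tau(w))$. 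For claim (i), applying the first product-rule consequence to each summand of $[x-y]_q g T(v+z)$ and $[x-y]_q g T(v+\tau(z))$ gives
\[
\Dv([x-y]_q g T(v+z))=\sum_\alpha P^g_\alpha(\bar{v}+z)\,T(\bar{v}+z+\delta^g_\alpha),
\]
\[
\Dv([x-y]_q g T(v+\tau(z)))=\sum_\alpha \sigma(P^g_\alpha)(\bar{v}+z)\,T(\bar{v}+\tau(z+\delta^g_\alpha)),
\]
and the two right-hand sides agree termwise because $\sigma$ acts trivially on evaluation at $\bar{v}$ (as $\bar{v}_{ki}=\bar{v}_{kj}$) and $T(\bar{v}+w)=T(\bar{v}+\tau(w))$.

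For claim (ii), applying the general product-rule termwise and invoking the covariance identity yields
\[
\Dv(g T(v+\tau(z)))=\sum_\alpha \Dv(\sigma(P^g_\alpha)(v+z))\,T(\bar{v}+\tau(z+\delta^g_\alpha))+\sigma(P^g_\alpha)(\bar{v}+z)\,\mathcal{D}T(\bar{v}+\tau(z+\delta^g_\alpha)).
\]
Using $\Dv(\sigma(f))=-\Dv(f)$ and $\sigma(f)(\bar{v})=f(\bar{v})$ on the coefficients, together with $T(\bar{v}+\tau(w))=T(\bar{v}+w)$ and $\mathcal{D}T(\bar{v}+\tau(w))=-\mathcal{D}T(\bar{v}+w)$ on the tableaux, each summand acquires an overall factor of $-1$, so the sum equals $-\Dv(g T(v+z))$. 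When $\tau(z+\delta^g_\alpha)=z+\delta^g_\alpha$ for some index $\alpha$, the corresponding derivative tableau vanishes by the defining relation $\mathcal{D}T(\bar{v}+w)+\mathcal{D}T(\bar{v}+\tau(w))=0$, which is consistent with the sign analysis.

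The main obstacle is the case $g\in\{e_k,f_k\}$ in the covariance identity, because the individual coefficients $P^g_p$ for $p\in\{i,j\}$ have genuine poles along $x=y$ coming from the denominator factors $[l_{k,i}-l_{k,p}]_q$ or $[l_{k,j}-l_{k,p}]_q$. These poles are exactly cancelled by the prefactor $[x-y]_q$ in claim (i); this explains why (i) must carry that factor and cannot be reduced to (ii). Establishing the identities $P^g_p(v+\tau(z))=\sigma(P^g_{(ij)(p)})(v+z)$ as rational-function identities on $\mathbb{C}(x,y)$ prior to restricting to $\mathcal{F}_{ij}$, and verifying the shift compatibility $\tau(z)+\delta^{k,(ij)(p)}=\tau(z+\delta^{kp})$, constitutes the technical heart of the argument.
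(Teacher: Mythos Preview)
Your argument is correct and follows the same underlying idea as the paper's proof in Appendix~A.1: both exploit the $\tau$-symmetry of the Gelfand--Tsetlin coefficients together with the $\Dv$-product rule and the tableau relations $T(\bar v+w)=T(\bar v+\tau(w))$, $\mathcal{D}T(\bar v+w)=-\mathcal{D}T(\bar v+\tau(w))$. Your packaging via the single covariance identity $gT(v+\tau(z))=\sum_\alpha\sigma(P^g_\alpha)(v+z)\,T(v+\tau(z+\delta^g_\alpha))$ is cleaner than the paper's case-by-case matching of indices $s\notin\{i,j\}$ versus $s\in\{i,j\}$, and it treats (i) and (ii) in parallel.

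One small point you should make explicit: your displayed formula for (i), namely $\Dv([x-y]_q\,gT(v+z))=\sum_\alpha P^g_\alpha(\bar v+z)\,T(\bar v+z+\delta^g_\alpha)$, presupposes $P^g_\alpha\in\mathcal{F}_{ij}$, which fails for $g\in\{e_k,f_k\}$ and $\alpha\in\{i,j\}$ precisely when $\tau(z)=z$. You should insert the remark that (i) is a tautology in that case, so one may assume $\tau(z)\neq z$ throughout. The paper avoids this by keeping the factor $[x-y]_q$ attached to each coefficient and working with $[x-y]_q\,e_{ks}(v+z)\in\mathcal{F}_{ij}$ directly; that makes the computation uniform in $z$ at the price of carrying nonzero $\mathcal{D}T$-contributions for the singular indices, which are then matched via Lemma~\ref{dif operaator in functions}(vi).
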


It remains to prove that this well-defined action endows $V_{\mathcal{C}}(T(\bar{v}))$  with a $U_q$-module structure.

\begin{lemma} \label{dij-commute}
Let $g$ be generator of $U_q$.
\begin{itemize}
\item[(i)] $g (T(\ov + z)) = {\rm ev} ( \ov) g ( T(v+z))$ whenever $\tau(z) \neq z$.
\item[(ii)] $\mathcal{D}^{\ov} g (F) = g \mathcal{D}^{\ov} (F)$ if $F$ and $g(F)$ are in ${\mathcal F}_{ij} \otimes {V}(T(v))$.
\item[(iii)] $\mathcal{D}^{\ov} ( [x-y]_qg (F)) = g( {\rm ev} (\ov) F)$ if $F \in {\mathcal F}_{ij} \otimes {V}(T(v))$.
\end{itemize}
\end{lemma}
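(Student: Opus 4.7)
By $\mathbb{C}$-linearity each claim reduces to the case $F = fT(v+z)$ with $f\in\mathcal{F}_{ij}$ and a single $z\in\mathbb{Z}^{n(n-1)/2}$. Two elementary calculations drive the proofs. First, $\mathcal{D}^{\bar{v}}([x-y]_q h) = h(\bar{v})$ for every $h\in\mathcal{F}_{ij}$, immediate from $[x-y]_q|_{\bar{v}}=0$ and $(\partial_x-\partial_y)[x-y]_q|_{\bar{v}} = 4\ln q/(q-q^{-1})$. Second, $\mathcal{D}^{\bar{v}}$ satisfies the Leibniz identity $\mathcal{D}^{\bar{v}}(fh) = \mathcal{D}^{\bar{v}}(f)\,\mathrm{ev}(\bar v)(h) + f(\bar v)\,\mathcal{D}^{\bar{v}}(h)$ for $f\in\mathcal{F}_{ij}$ and $h\in\mathcal{F}_{ij}\otimes V(T(v))$ (expand $h$ in the tableau basis and apply the scalar Leibniz rule). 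Since every denominator in the Gelfand-Tsetlin formulas (\ref{Gelfand-Tsetlin formulas}) is a first-order factor, $gT(v+z)$ has at most a simple pole along $x=y$; decompose it as $gT(v+z) = B/[x-y]_q + C$ with $B,C \in \mathcal{F}_{ij}\otimes V(T(v))$. Inspection of the formulas shows $B=0$ when $\tau(z)\neq z$, while for $\tau(z)=z$ the residue $B$ is supported on the two tableaux $T(v+z+\delta^{ki})$ and $T(v+z+\delta^{kj})$.

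Part (i). Since $\tau(z) \neq z$ gives $B=0$, one has $gT(\bar v + z) := \mathcal{D}^{\bar{v}}([x-y]_q\,gT(v+z)) = \mathcal{D}^{\bar{v}}([x-y]_q C) = \mathrm{ev}(\bar v)(C) = \mathrm{ev}(\bar v)\,gT(v+z)$ by applying the scalar identity term by term.

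Part (ii). When $\tau(z)\neq z$, combine the Leibniz rule with part (i) and the definition $g\mathcal{D}T(\bar v + z) = \mathcal{D}^{\bar{v}}(gT(v+z))$ to obtain
\[
\mathcal{D}^{\bar{v}}(gF) = \mathcal{D}^{\bar{v}}(f\,gT(v+z)) = \mathcal{D}^{\bar{v}}(f)\,\mathrm{ev}(\bar v)(gT(v+z)) + f(\bar v)\,\mathcal{D}^{\bar{v}}(gT(v+z)) = g\mathcal{D}^{\bar{v}}(F).
\]
When $\tau(z)=z$, the hypothesis $gF\in\mathcal{F}_{ij}\otimes V(T(v))$ forces $fB/[x-y]_q$ to be smooth, so $f = [x-y]_q f'$ for some $f'\in\mathcal{F}_{ij}$; using $\mathcal{D}T(\bar v + z) = 0$ and the key cancellation $\mathrm{ev}(\bar v)(B)=0$ (see below), a direct calculation reduces both sides to $f'(\bar v)[\mathcal{D}^{\bar{v}}(B)+\mathrm{ev}(\bar v)(C)]$.

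Part (iii). Writing $[x-y]_q gF = fB + [x-y]_q fC$, the Leibniz and scalar identities give
\[
\mathcal{D}^{\bar{v}}([x-y]_q gF) = \mathcal{D}^{\bar{v}}(f)\,\mathrm{ev}(\bar v)(B) + f(\bar v)[\mathcal{D}^{\bar{v}}(B) + \mathrm{ev}(\bar v)(C)] = \mathcal{D}^{\bar{v}}(f)\,\mathrm{ev}(\bar v)(B) + f(\bar v)\,gT(\bar v + z),
\]
whereas $g(\mathrm{ev}(\bar v) F) = f(\bar v)\,gT(\bar v + z)$. Equality thus reduces to $\mathrm{ev}(\bar v)(B)=0$ in $V(T(\bar v))$, which I view as the main technical obstacle. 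For $\tau(z)=z$, a direct inspection of the Gelfand-Tsetlin coefficients of $T(v+z+\delta^{ki})$ and $T(v+z+\delta^{kj})$ in $gT(v+z)$ shows that the two residues $b_{\pm}$ at $x=y$ satisfy $b_{+}(\bar v) = -b_{-}(\bar v)$, a sign arising from the opposite-sign factors $[y-x]_q = -[x-y]_q$ in the two relevant denominators. Since $\tau(z+\delta^{ki}) = z+\delta^{kj}$, the identification $T(\bar v + z + \delta^{ki}) = T(\bar v + z + \delta^{kj})$ collapses $\mathrm{ev}(\bar v)(B)$ to $[b_{+}(\bar v)+b_{-}(\bar v)]\,T(\bar v + z + \delta^{ki}) = 0$, as needed.
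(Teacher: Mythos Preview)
Your proof is correct, and your residue decomposition $gT(v+z)=B/[x-y]_q+C$ gives a cleaner organizing principle than the paper's argument. The paper proves (i) essentially as you do; for (ii) it computes
\[
\mathcal{D}^{\bar v}g(fT(v+z))-g\mathcal{D}^{\bar v}(fT(v+z))
=\mathcal{D}^{\bar v}(f)\bigl(\operatorname{ev}(\bar v)g(T(v+z))-g(T(\bar v+z))\bigr)
\]
and invokes (i), but this manipulation tacitly uses $g(T(v+z))\in\mathcal F_{ij}\otimes V(T(v))$, which fails precisely when $\tau(z)=z$ and $g\in\{e_k,f_k\}$. The paper then deduces (iii) from (ii) applied to $[x-y]_qF$, so the same case is unaddressed there as well. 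Your explicit treatment of $\tau(z)=z$ via the residue cancellation $\operatorname{ev}(\bar v)(B)=0$ in $V_{\mathcal C}(T(\bar v))$---coming from $b_+(\bar v)=-b_-(\bar v)$ together with the tableau identification $T(\bar v+z+\delta^{ki})=T(\bar v+z+\delta^{kj})$---is exactly what is needed to close this gap, and it is the genuine content of the lemma in that case.

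One small point of phrasing: in the $\tau(z)=z$ branch of (ii) you write ``$f=[x-y]_q f'$ for some $f'\in\mathcal F_{ij}$''. What the hypothesis $gF\in\mathcal F_{ij}\otimes V(T(v))$ actually forces is only $f(\bar v)=0$, hence that $f/[x-y]_q$ is regular \emph{at} $\bar v$; since $[x-y]_q$ may have other zeros on $\overline{\mathcal H}$, global membership of $f'$ in $\mathcal F_{ij}$ is not guaranteed. This does not affect your argument, because the only facts you use about $f'$ are its value $f'(\bar v)=\mathcal D^{\bar v}(f)$ and the local Leibniz identity at $\bar v$. It would be cleaner to state it that way.
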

\begin{proof}

(i) Since $\tau(z) \neq z$, $ g ( T(v+z))\in {\mathcal F}_{ij} \otimes {V}(T(v))$. Thus
\begin{align*}
g (T(\ov + z))&=\mathcal{D}^{\ov} ([x-y]_q g (T(v+ z)))\\
&=\mathcal{D}^{\ov} ([x-y]_q)\mbox{ev}(\ov)g (T(v+ z))
+\mbox{ev}(\ov)([x-y]_q) \mathcal{D}^{\ov}g (T(v+ z)\\
&=\mbox{ev}(\ov)g (T(v+ z).
\end{align*}
(ii) Using (i) and the facts that $g( T(v+z)) $ is in  ${\mathcal F}_{ij} \otimes {\mathcal V}$ and $g(\mathcal{D}T(\bar{v} + z))=\mathcal{D}^{\ov}(g( T(v + z)))$ we have
$$
\mathcal{D}^{\ov}g ( f T(v+z))-g \mathcal{D}^{\ov} ( f T(v+z)) = \mathcal{D}^{\ov} (f) \left( \mbox{ev} (\ov)g( T(v+z))  - g( T(\ov+z))  \right) =0.
$$

(iii) Taking into consideration that $ [x-y]_q g(F)\in  {\mathcal F}_{ij} \otimes {V}(T(v))$, by (ii) we have
\begin{eqnarray*}
 \mathcal{D}^{\ov} \left( [x-y]_q g (fT(v + z)) \right) & = & g \mathcal{D}^{\ov} \left( [x-y]_qf(T(v + z)) \right)\\
& = & g ( \mbox{ev}(\ov) f T(v+z)).
\end{eqnarray*}

\end{proof}

\begin{proposition} \label{t-v-rep}
Set $\bar{v}$ be any the fixed $(1,\mathcal{C})$-singular vector and $z\in\mathbb{Z}^{\frac{n(n-1)}{2}}$.
\begin{itemize}
\item[(i)]
$q^{0}(T(\ov + z))=T(\ov + z),\  q^{h}q^{h'}(T(\ov + z))=q^{h+h'}(T(\ov + z)) \quad (h,h' \in  P)$.
\item[(ii)]$q^{h}e_{r}q^{-h}T(\ov + z)=q^{\langle h,\alpha_r\rangle}e_{r} (T(\ov + z))$.
\item[(iii)]$q^{h}f_{r}q^{-h}(T(\ov + z))=q^{-\langle h,\alpha_r\rangle}f_{r}(T(\ov + z))$.
\item[(iv)]$(e_{r}f_{s}-f_{s}e_{r})(T(\ov + z))=\delta_{rs}\frac{q^{\alpha_r}-q^{-\alpha_r}}{q-q^{-1}}(T(\ov + z))$.
\item[(v)]$(e_{r}^2e_{s}-(q+q^{-1})e_re_se_r+e_je_{r}^2)  (T(\ov + z))=0  \quad (|r-s|=1)$.
\item[(vi)]$(f_{r}^2f_{s}-(q+q^{-1})f_rf_sf_r+f_sf_{r}^2)  (T(\ov + z))=0  \quad (|r-s|=1)$.
\item[(vii)]
$e_{r}e_s (T(\ov + z))=e_se_r (T(\ov + z))$, and $
  f_rf_s (T(\ov + z))=f_sf_r (T(\ov + z))$, $(|r-s|>1)$.
\end{itemize}
\end{proposition}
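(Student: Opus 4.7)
The plan is to reduce each of the seven relations to the corresponding $U_q$-relation on the parametric module $V_{\mathcal C}(T(v))$ over $\mathbb C(x,y)$, which already carries a $U_q$-module structure via the Gelfand-Tsetlin formulas by Theorem \ref{sufficiency of admissible}. The transfer from the parametric module to $V_{\mathcal C}(T(\bar v))$ is effected by the operator $\mathcal D^{\bar v}$, and Lemma \ref{dij-commute} provides the commutation rules that let us slide the $U_q$-action past $\mathcal D^{\bar v}$.

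The main step is a transfer lemma, proved by induction on the length of a monomial $M$ in the generators $\{e_r, f_r, q^{\epsilon_r}\}$, asserting
\begin{align*}
M(T(\bar v + z)) &= \mathcal D^{\bar v}\bigl([x-y]_q\, M(T(v+z))\bigr),\\
M(\mathcal D T(\bar v + z')) &= \mathcal D^{\bar v}\bigl(M(T(v+z'))\bigr)
\end{align*}
for all $z\in \mathbb Z^{n(n-1)/2}$ and all $z'$ with $\tau(z')\neq z'$. The base case $|M|=1$ is the definition of the action on $V_{\mathcal C}(T(\bar v))$. In the inductive step $M = g\cdot M'$, the hypothesis expresses $M'(T(\bar v+z))$ as $\mathcal D^{\bar v}(F)$ with $F = [x-y]_q\, M'(T(v+z))$; Lemma \ref{dij-commute}(ii) then rewrites $g(\mathcal D^{\bar v}(F))$ as $\mathcal D^{\bar v}(g(F))$, and $\mathcal F_{ij}$-linearity of the action pulls the scalar $[x-y]_q$ past $g$, yielding the required expression. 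The derivative case is identical with the prefactor $[x-y]_q$ omitted. Each of the relations (i)--(vii) then follows immediately: writing the $U_q$-identity as $R(e_*, f_*, q^*) = 0$ and evaluating both sides on $T(\bar v+z)$ produces $\mathcal D^{\bar v}\bigl([x-y]_q\, R(T(v+z))\bigr)$, which vanishes because $R(T(v+z)) = 0$ already holds in $V_{\mathcal C}(T(v))$.

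The principal obstacle is verifying the hypothesis of Lemma \ref{dij-commute}(ii) at every step of the induction, i.e.\ that both $F = [x-y]_q\, M'(T(v+z))$ and $g(F) = [x-y]_q\, M(T(v+z))$ remain in $\mathcal F_{ij}\otimes V(T(v))$. This is a regularity statement: the rational coefficients produced by iterated Gelfand-Tsetlin formulas must be smooth on $\overline{\mathcal H}$. Admissibility of $\mathcal C$, together with the vanishing cutoff in (\ref{c-for1})--(\ref{c-for3}) that kills coefficients outside $\mathcal B_{\mathcal C}(T(L))$, forbids any pole of these coefficients off the singular hyperplane $v_{ki}-v_{kj}\in \tfrac{1(q)}{2}+\mathbb Z$, and that single permitted pole is precisely absorbed by the prefactor $[x-y]_q$ before $\mathcal D^{\bar v}$ is applied. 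The Serre relations (v) and (vi) require no special treatment beyond a degree-three iteration of the inductive step.
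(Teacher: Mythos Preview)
Your approach is the paper's approach: iterate Lemma~\ref{dij-commute}(ii) to push $\mathcal D^{\bar v}$ past the generators, reducing each relation to the corresponding identity in the generic module $V_{\mathcal C}(T(v))$ over $\mathbb C(x,y)$. The paper writes out only item (v) and declares the rest similar, just as you do.

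The gap is in your regularity claim. Your transfer lemma, phrased for an \emph{arbitrary} monomial $M$, is false: the hypothesis of Lemma~\ref{dij-commute}(ii), that $[x-y]_q\,M(T(v+z))\in\mathcal F_{ij}\otimes V(T(v))$, does not hold in general. For example, with $z_{ki}=z_{kj}$ and $M=e_k^3$, the index sequence $i,j,i$ produces a coefficient with a double pole at $x=y$, which a single factor of $[x-y]_q$ does not clear. What rescues the argument for \emph{this} proposition is the specific shape of the $U_q$-relations: every monomial occurring in (i)--(vii) contains at most two factors from $\{e_k,f_k\}$ (in the Serre relations $|r-s|=1$ forces $s\neq k$ when $r=k$), and two such factors yield at most a simple pole on row $k$. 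This is precisely the paper's observation ``$\#\{r_t:r_t=k\}\le 2$''. You should state the induction only for monomials obeying this bound and explain why it guarantees at most a simple pole, rather than asserting that ``that single permitted pole is precisely absorbed'' as a general fact. Your parallel claim for $\mathcal D T(\bar v+z')$ is not needed here and is still more delicate --- without the prefactor $[x-y]_q$, even two $e_k$'s can exit $\mathcal F_{ij}$ --- which is why the paper handles that case separately, by direct computation, in Proposition~\ref{d-t-v-rep}.
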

\begin{proof}
We only give the proof of (v). Other statements can be proved similarly.
\begin{align*}
&(e_{r}^2e_{s}-(q+q^{-1})e_re_se_r+e_se_{r}^2)  (T(\ov + z))\\
=&(e_{r}^2e_{s}-(q+q^{-1})e_re_se_r+e_se_{s}^2) \Dv ([x-y]_qT(v + z))
\end{align*}
For any $r_1,r_2,r_3$, if $\#\{r_t:r_t=k\}\leq 2$, then $[x-y]_qT(v + z)$,  $[x-y]_q e_{r_1}T(v + z)$, $[x-y]_qe_{r_2}e_{r_1}T(v + z)$, $[x-y]_qe_{r_3}e_{r_2}e_{r_1}T(v + z)$ are in
${\mathcal F}_{ij} \otimes {V}(T(v))$.
It follows from Lemma \ref{dij-commute} (ii) that
\begin{align*}
&(e_{i}^2e_{j}-(q+q^{-1})e_ie_je_i+e_je_{i}^2)  (T(\ov + z))\\
=& \Dv((e_{i}^2e_{j}-(q+q^{-1})e_ie_je_i+e_je_{i}^2) ([x-y]_qT(v + z)))\\
=&0.
\end{align*}
\end{proof}

The proof of the following proposition will be given in Appendix \ref{Section: Appendix}.
\begin{proposition} \label{d-t-v-rep}
Set $\bar{v}$ be any the fixed $(1,\mathcal{C})$-singular vector and $z\in\mathbb{Z}^{\frac{n(n-1)}{2}}$ such that $\tau(z)\neq z$. Then
\begin{itemize}
\item[(i)]$
q^{0}(\D T(\ov + z))=(\D T(\ov+ z))$, and $q^{h}q^{h'}(\D T(\ov + z))=q^{h+h'}(T(\ov + z))$, for any $h,h' \in  P$.
\item[(ii)]$q^{h}e_{r}q^{-h}(\D T(\ov + z))=q^{\langle h,\alpha_r\rangle}e_{r} (\D T(\ov + z))$.
\item[(iii)]$q^{h}f_{r}q^{-h}(\D T(\ov + z))=q^{-\langle h,\alpha_r\rangle}f_{r}(\D T(\ov + z))$.
\item[(iv)]$(e_{r}f_{s}-f_{s}e_{r})(\D T(\ov + z))=\delta_{rs}\frac{q^{\alpha_r}-q^{-\alpha_r}}{q-q^{-1}}(\D T(\ov + z))$.
\item[(v)]$(e_{r}^2e_{s}-(q+q^{-1})e_re_se_r+e_se_{r}^2)  (\D T(\ov + z))=0  \quad (|r-s|=1)$.
\item[(vi)]$(f_{r}^2f_{s}-(q+q^{-1})f_rf_sf_r+f_sf_{r}^2)  (\D T(\ov + z))=0  \quad (|r-s|=1)$.
\item[(vii)]
$e_{r}e_s (\D T(\ov + z))=e_se_r (\D T(\ov + z))$, and $
  f_rf_s (\D T(\ov + z))=f_sf_r (\D T(\ov + z))$, $(|r-s|>1)$.
\end{itemize}
\end{proposition}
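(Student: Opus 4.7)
My plan is to reduce each defining relation of $U_q$ on $\mathcal{D} T(\ov+z)$ to the same relation on $T(v+z)$ in the generic module $V_{\mathcal{C}}(T(v))$, which by Theorem \ref{sufficiency of admissible} is a $U_q$-module over $\mathbb{C}(x,y)$ once we treat $v_{ki}=x$ and $v_{kj}=y$ as indeterminates. The starting point is the identity $\mathcal{D} T(\ov+z)=\Dv(T(v+z))$ together with the definition $g(\mathcal{D} T(\ov+z))=\Dv(gT(v+z))$, valid whenever $z\neq\tau(z)$; the opposite case forces $\mathcal{D} T(\ov+z)=0$ by antisymmetry. For each relation the goal is to prove the commutation identity
\[
W(\mathcal{D} T(\ov+z)) = \Dv\bigl(W\,T(v+z)\bigr),
\]
where $W$ is the word in the generators on the left-hand side of the relation. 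Since $W$ already equals its prescribed right-hand side in the generic module $V_{\mathcal{C}}(T(v))$, applying $\Dv$ to both sides yields the target relation in $V_{\mathcal{C}}(T(\ov))$. The displayed identity itself would be obtained by iterating Lemma \ref{dij-commute}(ii) along the letters of $W$, which requires every intermediate expression to lie in $\mathcal{F}_{ij}\otimes V_{\mathcal{C}}(T(v))$.

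For parts (i)--(iv) and (vii) this smoothness hypothesis is automatic: at most one letter of $W$ acts on the singular row $k$, so all intermediate expressions stay in $\mathcal{F}_{ij}\otimes V_{\mathcal{C}}(T(v))$ and the iteration of Lemma \ref{dij-commute}(ii) goes through as in Proposition \ref{t-v-rep}. The remaining book-keeping is to match each right-hand side with its scalar action on $\mathcal{D} T(\ov+z)$ via the product rule for $\Dv$.

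The main obstacle is parts (v) and (vi), the quantum Serre relations, when either $r=k$ or $s=k$. Individual summands of $S=e_r^2 e_s-(q+q^{-1})e_r e_s e_r+e_s e_r^2$ can produce double poles along the singular hyperplane $x=y$, so termwise iteration of Lemma \ref{dij-commute}(ii) fails. My plan here would be to emulate the trick of Proposition \ref{t-v-rep}(v) and work with $[x-y]_q T(v+z)$: the smoothness claim proved there says that $[x-y]_q g_3 g_2 g_1 T(v+z)\in\mathcal{F}_{ij}\otimes V_{\mathcal{C}}(T(v))$ whenever at most two of $g_1,g_2,g_3$ equal $e_k$, so each Serre summand, once multiplied by $[x-y]_q$, lies in the domain of $\Dv$ and Lemma \ref{dij-commute}(ii) can be iterated. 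Thus $\Dv([x-y]_q S T(v+z))=\Dv(0)=0$. Expanding via the product rule $\Dv(f\,T(v+z'))=\Dv(f)T(\ov+z')+f(\ov)\mathcal{D} T(\ov+z')$ with $f=[x-y]_q$ (so $\Dv(f)=1$ and $f(\ov)=[0]_q=0$) recovers Proposition \ref{t-v-rep}(v). Running the companion expansion in which the roles of the two coefficients are swapped --- informally, extracting the order-one term of the Taylor expansion $T(v+z)=T(\ov+z)+\varepsilon\,\mathcal{D} T(\ov+z)+O(\varepsilon^2)$ of the identity $S T(v+z)=0$ --- isolates the $\mathcal{D} T(\ov+z')$-contribution and yields the desired $S(\mathcal{D} T(\ov+z))=0$.

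The hardest step, and presumably the reason the proof is relegated to the appendix, will be the treatment of shifted indices $z'$ that land on the fixed locus $z'_{ki}=z'_{kj}$, where $\mathcal{D} T(\ov+z')=0$ by antisymmetry but $T(\ov+z')\neq 0$. These boundary contributions couple the $T$- and $\mathcal{D} T$-parts of the product-rule expansion asymmetrically, so ensuring that the rational-function cancellations in $S T(v+z)=0$ survive the extraction of the derivative coefficient will require a case-by-case verification in the spirit of the $\gl_n$ analysis in \cite{FGR3}, adapted to the quantum Gelfand-Tsetlin formulas (\ref{c-for1})--(\ref{c-for3}).
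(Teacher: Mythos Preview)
Your reduction strategy via Lemma \ref{dij-commute}(ii) is exactly the paper's, and your last paragraph correctly anticipates that the real work lies in tracking what happens when an intermediate shift lands on the fixed locus $z'_{ki}=z'_{kj}$. But there is a genuine gap in your treatment of part (iv).

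You assert that in (i)--(iv) and (vii) ``at most one letter of $W$ acts on the singular row $k$'', and therefore the iteration of Lemma \ref{dij-commute}(ii) is automatic. This fails for (iv) when $r=s=k$: the word $e_kf_k-f_ke_k$ contains two letters acting on row $k$. Concretely, if $|z_{ki}-z_{kj}|=1$ (say $z_{ki}=0$, $z_{kj}=1$), then $f_kT(v+z)$ contains the summand $f_{kj}(v+z)T(v+z-\delta^{kj})$, whose tableau sits on the fixed locus; applying $e_k$ to it produces coefficients $e_{ki}(v+z-\delta^{kj})$, $e_{kj}(v+z-\delta^{kj})$ with genuine poles at $x=y$, so $e_kf_kT(v+z)\notin\mathcal{F}_{ij}\otimes V_{\mathcal{C}}(T(v))$ and Lemma \ref{dij-commute}(ii) is inapplicable. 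The paper treats this case by the same explicit coefficient-matching you sketch for (v): it expands $(e_kf_k-f_ke_k)(\mathcal{D}T(\ov+z))$ term by term, separating the summands that stay off the fixed locus (where Lemma \ref{dij-commute}(ii) applies) from those that hit it (where one must use $g(T(\ov+z'))=\Dv([x-y]_q\,gT(v+z'))$ instead), and then verifies by hand---using Lemmas \ref{sym formula} and \ref{dv-formulas}---that the coefficients of each $T(\ov+z+\delta^{ks}-\delta^{kr})$ and $\mathcal{D}T(\ov+z+\delta^{ks}-\delta^{kr})$ vanish for $r\neq s$ and that the surviving coefficient of $\mathcal{D}T(\ov+z)$ is $h_k(\ov+z)$. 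This computation is of the same nature and length as the one for (v).

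For (v) itself, your plan is on the right track but the ``companion expansion / order-one Taylor term'' heuristic does not turn into an argument without doing exactly the computation you defer to the last paragraph. The paper does not extract any Taylor coefficient of $ST(v+z)=0$; it simply writes out $S(\mathcal{D}T(\ov+z))$ using the definitions, isolates the dangerous terms (those passing through $z+\delta^{ki}$ on the fixed locus when $|z_{ki}-z_{kj}|=1$), and kills each resulting coefficient with Lemma \ref{dv-formulas}. Also note that only $r=k$ is genuinely hard: when $s=k$ each monomial contains a single $e_k$, and since $e_{k\pm1}$ has smooth coefficients and does not move row $k$, the iteration of Lemma \ref{dij-commute}(ii) goes through after all.
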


Combining Propositions \ref{t-v-rep} and \ref{d-t-v-rep} we obtain our main result.

\begin{theorem}\label{thm-main}
If $\bar{v}$ is an $(1,\mathcal{C})$-singular vector in ${\mathbb C}^{\frac{n(n+1)}{2}}$, then  $V_{\mathcal{C}}(T(\bar{v}))$ is a $U_q$-module, with action of the generators of $U_q$ given by
\begin{align*}
g(T(\bar{v} + z))=&\  \mathcal{D}^{\bar{v}}([x-y]_qg(T(v + z)))\\
g(\mathcal{D}T(\bar{v} + z')))=&\ \mathcal{D}^{\bar{v}} ( g(T(v + z'))),
\end{align*}
for any $z,z'\in\mathbb{Z}^{\frac{n(n-1)}{2}}$ with $z'\neq\tau(z')$.
 \end{theorem}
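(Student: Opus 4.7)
The plan is to reduce the theorem to the three technical facts established (or to be established) earlier in the section, namely Propositions \ref{compatible}, \ref{t-v-rep}, and \ref{d-t-v-rep}. Since $V_{\mathcal{C}}(T(\bar v))$ is presented by the spanning set $\{T(\bar v+z),\,\mathcal{D}T(\bar v+z)\mid T(\bar v+z)\in \mathcal{B}_{\mathcal{C}}(T(\bar v))\}$ subject to the linear relations $T(\bar v+z)=T(\bar v+\tau(z))$ and $\mathcal{D}T(\bar v+z)+\mathcal{D}T(\bar v+\tau(z))=0$, the proof must do exactly two things: (a) check that the operators $g\mapsto g(T(\bar v+z))$ and $g\mapsto g(\mathcal{D}T(\bar v+z))$ respect these relations, and (b) check that they satisfy the defining relations of $U_q$.

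First I would handle the well-definedness. For the fixed singular triple $(k,i,j)$ the transposition $\tau$ only permutes the $k$th row entries $v_{ki}$ and $v_{kj}$. Proposition \ref{compatible}(i) gives
\[\mathcal{D}^{\bar v}([x-y]_qg(T(v+z)))=\mathcal{D}^{\bar v}([x-y]_qg(T(v+\tau(z)))),\]
which is exactly the statement that the $g$-action on $T(\bar v+z)$ agrees with the $g$-action on $T(\bar v+\tau(z))$, so it descends to the quotient identifying these two tableaux. Similarly, Proposition \ref{compatible}(ii) says $\mathcal{D}^{\bar v}(g(T(v+z)))=-\mathcal{D}^{\bar v}(g(T(v+\tau(z))))$ for $\tau(z)\neq z$, which is exactly the antisymmetry needed to make the $g$-action on $\mathcal{D}T(\bar v+z)$ compatible with the relation $\mathcal{D}T(\bar v+z)=-\mathcal{D}T(\bar v+\tau(z))$. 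Thus the formulas in the statement yield a well-defined linear endomorphism $g$ of $V_{\mathcal{C}}(T(\bar v))$ for each generator.

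Next I would verify the $U_q$ relations. By linearity and the well-definedness above, it suffices to check each defining relation of $U_q$ from the presentation recalled in Section 2 on every element of the spanning set. This is precisely the content of Propositions \ref{t-v-rep} and \ref{d-t-v-rep}: the former checks all Cartan, $q$-Serre, and commutator relations on basis vectors $T(\bar v+z)$, while the latter checks the same relations on the derivative tableaux $\mathcal{D}T(\bar v+z')$ for $\tau(z')\neq z'$. Combining these, every defining relation of $U_q$ annihilates every generator of $V_{\mathcal{C}}(T(\bar v))$, so the action factors through $U_q$.

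The main obstacle is of course inside Propositions \ref{t-v-rep} and \ref{d-t-v-rep} themselves, and the proof pattern is the same in both: one lifts the tableau $T(\bar v+z)$ to the generic setting $T(v+z)$ where $V_{\mathcal{C}}(T(v))$ is a bona fide $U_q$-module over $\mathbb{C}(x,y)$ by Theorem \ref{sufficiency of admissible}, multiplies by $[x-y]_q$ (or not, in the derivative case), applies the commutation identity one wants to check as an identity in $V_{\mathcal{C}}(T(v))\otimes\mathcal{F}_{ij}$, and then applies $\mathcal{D}^{\bar v}$. The compatibility lemma \ref{dij-commute} then lets one pull $\mathcal{D}^{\bar v}$ through the generators, provided the intermediate expressions remain in $\mathcal{F}_{ij}\otimes V(T(v))$, i.e.\ their $(k,i,j)$-denominators are at most simple. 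This last regularity check is the real subtlety: for a monomial in generators $g_{r_1}\cdots g_{r_\ell}$ acting on $T(v+z)$, at most two of the indices $r_t$ can equal $k$ (as exploited in the proof of \ref{t-v-rep}(v)), and for the derivative case one additionally uses $\tau(z')\neq z'$ to gain a factor $[x-y]_q$ cancellation. Once those denominator bounds are in place, the relations drop out of the identities already valid over $\mathbb{C}(x,y)$ by Theorem \ref{sufficiency of admissible}, and the theorem follows immediately by combining the two propositions.
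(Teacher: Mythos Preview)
Your proposal is correct and follows essentially the same approach as the paper: the paper's proof is the single sentence ``Combining Propositions \ref{t-v-rep} and \ref{d-t-v-rep} we obtain our main result,'' with well-definedness already handled just before via Proposition \ref{compatible}. Your write-up simply makes explicit the two-step structure (well-definedness from Proposition \ref{compatible}, then the $U_q$ relations from Propositions \ref{t-v-rep} and \ref{d-t-v-rep}) and sketches accurately how those propositions are proved using Lemma \ref{dij-commute} and the denominator-regularity checks.
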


\subsection{Action of the  Gelfand-Tsetlin subalgebra on  $V_{\mathcal{C}}(T(\bar{v}))$}

\begin{lemma}\label{generators of Gamma acting in many tableaux}
\begin{itemize}
\item[(i)] If $z\in\mathbb{Z}^{\frac{n(n-1)}{2}}$ is such that $|z_{ki}-z_{kj}|\geq n-m$ for some $0 \leq m \leq n$, then for each $1\leq s \leq r\leq n-m$ we have:
\begin{itemize}
\item[(a)]
$
c_{rs}(T(\bar{v}+z))=\Dv([x-y]_qc_{rs}(T(v+z))),
$
\item[(b)]
$c_{rs}(\mathcal{D}T(\bar{v}+z))= \Dv(c_{rs}(T(v+z)))$ if $z\neq \tau(z)$,
\end{itemize}
\item[(ii)] If $1\leq s\leq r\leq k$ and $z\in\mathbb{Z}^{\frac{n(n-1)}{2}}$ then the action of $c_{rs}$ on $T(\bar{v}+z)$ and
$\mathcal{D}T(\bar{v}+z)$ is defined by the formulas in {\rm (i)}.

\end{itemize}
\end{lemma}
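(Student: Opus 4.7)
The plan is to extend the single-generator arguments of Proposition \ref{t-v-rep} to the central element $c_{rs}$ by iterating Lemma \ref{dij-commute}. Concretely, I would fix a monomial expansion of $c_{rs}$ in the Chevalley generators of $U_q(\gl_r)$ obtained from the Gauss decomposition of the $l_{ab}^\pm$ in Theorem \ref{generators of the quantum center}, and establish by induction on word length $d$ the following iteration statement: for any word $u=g_1\cdots g_d$ all of whose right subwords $g_a\cdots g_d(T(v+z))$ ($a=1,\ldots,d$) lie in $\mathcal F_{ij}\otimes V(T(v))$,
\begin{align*}
u(T(\bar v+z))&=\Dv\bigl([x-y]_q u(T(v+z))\bigr),\\
u(\mathcal D T(\bar v+z))&=\Dv\bigl(u(T(v+z))\bigr)\quad (z\neq\tau(z)).
\end{align*}
The inductive step reproduces the calculation in the proof of Proposition \ref{t-v-rep}(v): for $F=g_2\cdots g_d(T(v+z))$, Lemma \ref{dij-commute}(iii) yields $g_1\Dv([x-y]_q F)=g_1(\ev F)=\Dv([x-y]_q g_1 F)$, and (ii) gives the analogous statement for $\mathcal D T$; linearity then transports these identities to $c_{rs}$.

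The remaining work is to verify the regularity hypothesis in each case. For part (ii), the assumption $r\leq k$ restricts every generator in the expansion of $c_{rs}$ to indices $i\leq r-1\leq k-1$. Inspecting (\ref{c-for1})--(\ref{c-for3}), the only denominators present are $\prod_{c\neq d}[l_{ic}-l_{id}]_q$ with $i<k$, which never involve the singular difference $[l_{ki}-l_{kj}]_q$ and so are regular at $\bar v$ independently of $z$; hence all partial products lie in $\mathcal F_{ij}\otimes V(T(v))$ and the iteration lemma applies directly. For part (i), $c_{rs}$ may contain $e_k$ or $f_k$, whose coefficients carry $[l_{ki}-l_{kj}]_q^{-1}=[x-y+z'_{ki}-z'_{kj}]_q^{-1}$ and are therefore singular at $\bar v$ whenever an intermediate cumulative shift $z'$ satisfies $z'_{ki}=z'_{kj}$. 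I would control this by bounding the number of row-$k$ Chevalley generators in a monomial of $c_{rs}$: each factor $l_{ab}^\pm$ ($a,b\leq r$) contains at most one such generator in its Gauss form, so a monomial of $c_{rs}$ triggers at most $r\leq n-m$ row-$k$ shifts. Since each shift changes $z'_{ki}-z'_{kj}$ by at most $\pm 1$, the hypothesis $|z_{ki}-z_{kj}|\geq n-m$ keeps $|z_{ki}+z'_{ki}-z_{kj}-z'_{kj}|$ bounded away from $0$, ensuring every partial product is regular.

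The main obstacle is exactly this regularity count in part (i), particularly at the boundary case $|z_{ki}-z_{kj}|=n-m$ with an adversarial shift pattern, where the crude estimate only gives $\geq 0$: I expect to need either a finer analysis of the $l_{ab}^\pm$ that rules out such extremal sequences, or an argument that residual singular contributions cancel across monomials in the sum defining $c_{rs}$ thanks to its centrality. Once regularity is secured, the right-hand sides can be explicitly evaluated from $c_{rs}(T(v+z))=\gamma_{rs}(v+z)T(v+z)$ (Theorem \ref{sufficiency of admissible}), giving scalar action by $\gamma_{rs}(\bar v+z)$ for $r\neq k$ and, for $r=k$, an off-diagonal contribution proportional to $\Dv(\gamma_{ks}(v+z))$ on $\mathcal D T(\bar v+z)$, consistent with the non-diagonalizable action of $\Gamma_q$ described in Theorem \ref{GT module structure}.
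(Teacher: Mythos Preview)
Your approach coincides with the paper's: expand $c_{rs}$ in Chevalley generators through the Gauss factorisation of the $l_{ab}^{\pm}$ and iterate Lemma~\ref{dij-commute}(ii) along the chain of partial products, having first checked that each partial product lies in $\mathcal F_{ij}\otimes V(T(v))$. The argument for part~(ii) is identical to the paper's.

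Your worry about the boundary in part~(i) is an off-by-one slip, not a real obstruction. Regularity is needed \emph{before} each application of a row-$k$ generator, not after the last one. If a monomial of $c_{rs}$ contains $D\le r\le n-m$ row-$k$ letters, then immediately before any one of them at most $D-1\le n-m-1$ such letters have already acted, so the running shift $w$ satisfies
\[
\bigl|(z+w)_{ki}-(z+w)_{kj}\bigr|\;\ge\;|z_{ki}-z_{kj}|-(D-1)\;\ge\;(n-m)-(n-m-1)=1,
\]
and every intermediate coefficient lies in $\mathcal F_{ij}$. (Your weight observation that each term of $l_{ab}^{\pm}$ contributes exactly one $e_k$ or $f_k$ when $\min(a,b)\le k<\max(a,b)$ is what makes the bound $D\le r$ valid.) The paper invokes exactly this count, without spelling it out, when it asserts that the listed partial products have coefficients in $\mathcal F_{ij}$; neither a sharper analysis of the $l_{ab}^{\pm}$ nor any cancellation coming from centrality is required.
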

\begin{proof}
Recall that
$$c_{rs}=\sum_{\sigma,\sigma'\in S_r}(-q)^{l(\sigma)+l(\sigma')}
l_{\sigma(1),\sigma'(1)}^{+}\cdots l_{\sigma(s),\sigma'(s)}^{+}l_{\sigma(s+1),\sigma'(s+1)}^{-}\cdots l_{\sigma(r),\sigma'(r)}^{-}.$$
\begin{itemize}
\item[(i)]
The elements $l_{rs}^+$ with $r<s$ can be written as sum of products of elements of the form
$l_{r,r+1}^+, l_{r+1,r+2}^+, \ldots, l_{s-1,s}^+$ and $l_{sr}^-$, $r<s$  can be written as sum of products of  $l_{s,s-1}^-, l_{s-1,s-2}^-  \ldots, l_{r+1,r}^-$.
 By the hypothesis $|z_{ki}-z_{kj}|\geq n-m$, the coefficients  that appear in the decompositions of the following vectors are all in $\mathcal{F}_{ij}$:
 $[x-y]_q l_{\sigma(t),\sigma'(t)}^{-}\cdots l_{\sigma(r),\sigma'(r)}^{-}(T(v+z))(t>s)$,
  $[x-y]_q l_{\sigma(t),\sigma'(t)}^{+}\cdots l_{\sigma(r),\sigma'(r)}^{-}(T(v+z))(t\leq s)$,
   $ l_{\sigma(t),\sigma'(t)}^{-}\cdots l_{\sigma(r),\sigma'(r)}^{-}(T(v+z))$ ($t>s,z\neq\tau(z)$),
  $ l_{\sigma(t),\sigma'(t)}^{+}\cdots l_{\sigma(r),\sigma'(r)}^{-}(T(v+z))$ ($t\leq s,z\neq\tau(z)$)
 Then the statement follows from Lemma \ref{dij-commute}(ii).
\item[(ii)]   As $1\leq s\leq r\leq k$, then the every tableau that appears in the following elements have the same $(k,i)$th and $(k,j)$th entries:
\begin{align*}
[x-y]_q l_{\sigma(t),\sigma'(t)}^{-}\cdots l_{\sigma(r),\sigma'(r)}^{-}(T(v+z))& &(t>s),\\
[x-y]_q l_{\sigma(t),\sigma'(t)}^{+}\cdots l_{\sigma(r),\sigma'(r)}^{-}(T(v+z))& &(t\leq s),\\
 l_{\sigma(t),\sigma'(t)}^{-}\cdots l_{\sigma(r),\sigma'(r)}^{-}(T(v+z))& &(t>s,z\neq\tau(z)),\\
l_{\sigma(t),\sigma'(t)}^{+}\cdots l_{\sigma(r),\sigma'(r)}^{-}(T(v+z))& &(t\leq s,z\neq\tau(z)).
\end{align*} Hence all of the listed vectors are in ${\mathcal F}_{ij} \otimes {V}(T(v))$ and  Lemma \ref{dij-commute}(ii)  completes the proof.
\end{itemize}
    \end{proof}

\begin{lemma}\label{lem-ck2}
Assume $\tau(z)\neq z$. Then we have:
\begin{itemize}
\item[(i)] $c_{kr}(T(\bar{v}+z))=\gamma_{kr}(\bar{v}+z)T(\bar{v}+z)$
\item[(ii)] $(c_{kr}-\gamma_{kr})(\mathcal{D}T(\bar{v}+z))=0$ if and only if $r\in\{0,k\}$.
\item[(iii)] $(c_{kr}-\gamma_{kr}(\bar{v}+z))^{2}\mathcal{D}T(\bar{v}+z)=0.$
\end{itemize}

\end{lemma}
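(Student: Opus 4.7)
The overall plan is to reduce all three parts of the lemma to a single calculation: the derivative $\Dv(\gamma_{kr}(v+z))$, where $\gamma_{kr}$ is viewed as a rational function of $(x,y)$ via the parametrization $v_{ki}=x$, $v_{kj}=y$. The input is Theorem \ref{sufficiency of admissible}, which tells us that on the generic module $V_{\mathcal{C}}(T(v))$ over $\mathbb{C}(x,y)$ the element $c_{kr}\in\Gamma_q$ acts as multiplication by $\gamma_{kr}(v+z)$. Lemma \ref{generators of Gamma acting in many tableaux}(ii) (and a routine variant for $r\in\{0,k\}$, where $c_{kr}$ is a scalar multiple of $q^{\pm\sum_t \epsilon_t|_{U_q(\gl_k)}}$) then transports this action to $V_{\mathcal{C}}(T(\ov))$ through $\Dv$.

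For (i), I would compute
\[
c_{kr}(T(\ov+z)) = \Dv([x-y]_q\,\gamma_{kr}(v+z)\,T(v+z))
\]
and expand by the Leibniz rule for $\Dv$. Two simplifications occur: the boundary factor $[x-y]_q|_{\ov}=[0]_q=0$ kills any term involving $\mathcal{D}T(\ov+z)$, and an elementary direct computation from the definition gives $\Dv([x-y]_q)=1$; hence $c_{kr}(T(\ov+z))=\gamma_{kr}(\ov+z)T(\ov+z)$. For (ii), the same machinery gives
\[
c_{kr}(\mathcal{D}T(\ov+z)) = \Dv(\gamma_{kr}(v+z))\,T(\ov+z) + \gamma_{kr}(\ov+z)\,\mathcal{D}T(\ov+z),
\]
so (ii) reduces to the assertion $\Dv(\gamma_{kr}(v+z))=0 \Leftrightarrow r\in\{0,k\}$. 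For (iii), applying $(c_{kr}-\gamma_{kr}(\ov+z))$ once more to the previous display and pulling out the scalar $\Dv(\gamma_{kr}(v+z))$, the remaining factor $(c_{kr}-\gamma_{kr}(\ov+z))T(\ov+z)$ vanishes by (i), yielding $(c_{kr}-\gamma_{kr}(\ov+z))^{2}\mathcal{D}T(\ov+z)=0$.

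The heart of the argument is the combinatorial claim in (ii). Rewriting $\gamma_{kr}(L)$, up to a nonzero constant, as $\sum_{|S|=r,\, S\subseteq\{1,\ldots,k\}} q^{\sum_{t\in S}l_{kt}-\sum_{t\notin S}l_{kt}}$, I would split the sum according to $|S\cap\{i,j\}|$. Subsets with $\{i,j\}\subseteq S$ or $S\cap\{i,j\}=\emptyset$ contribute exponents depending on $(x,y)$ only through $x+y$, so $(\partial_x-\partial_y)$ annihilates them. The remaining \emph{unbalanced} subsets pair as $S=T\cup\{i\}$ versus $S'=T\cup\{j\}$ for $T\subseteq\{1,\ldots,k\}\setminus\{i,j\}$ of size $r-1$; this pairing produces a combined contribution proportional to $q^{z_{ki}-z_{kj}}-q^{z_{kj}-z_{ki}}$, nonzero precisely because $\tau(z)\neq z$. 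Such $T$ exist iff $1\leq r\leq k-1$, which is exactly the failure of $r\in\{0,k\}$. The main obstacle I anticipate is this bookkeeping: although $\gamma_{kr}$ is symmetric under $l_{ki}\leftrightarrow l_{kj}$, the distinct shifts $z_{ki}\ne z_{kj}$ break the $i\leftrightarrow j$ symmetry when differentiating in $(x,y)$, and one must track carefully how the two "half-unbalanced" families recombine rather than cancel.
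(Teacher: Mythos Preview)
Your proposal is correct and follows essentially the same route as the paper: both invoke Lemma~\ref{generators of Gamma acting in many tableaux}(ii) to transport the scalar action $c_{kr}T(v+z)=\gamma_{kr}(v+z)T(v+z)$ through $\Dv$, reduce part~(ii) to the computation of $\Dv(\gamma_{kr}(v+z))$, and use the symmetric-function structure of $\gamma_{kr}$ to decide when this derivative vanishes. Your splitting by $|S\cap\{i,j\}|$ just makes explicit what the paper condenses into the line $\Dv(\gamma_{kr}(v+z))=\tfrac{a(q-q^{-1})^2[z_{ki}-z_{kj}]_q}{2}$, where their coefficient $a$ is, up to the overall nonzero constant, exactly your sum $\sum_{T} q^{C_T}$ over unbalanced pairs.
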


\begin{proof}
Recall that if $w$ is a $\mathcal{C}$-generic vector then $ c_{kr}T(w)=\gamma_{kr}(w)T(w)$, where $\gamma_{kr}(w)$ is a symmetric function in variables $w_{k1},\ldots,w_{kk}$.
\begin{itemize}
\item[(i)]  By Lemma \ref{generators of Gamma acting in many tableaux}(ii),  we have
\begin{align*}
c_{kr}(T(\bar{v}+z))=&\Dv([x-y]_qc_{kr}(T(v+z)))\\
=&\Dv([x-y]_q\gamma_{kr}(v+z)T(v+z))\\
=&\gamma_{kr}(\bar{v}+z)T(\bar{v}+z).
\end{align*}
\item[(ii)] Also by Lemma \ref{generators of Gamma acting in many tableaux}(ii) we have:
\begin{align*}
c_{kr}(\mathcal{D}T(\bar{v}+z)))=& \Dv(c_{kr}(T(v+z)))\\
=& \Dv(\gamma_{kr}(v+z)T(v+z))\\
=& \Dv(\gamma_{kr}(v+z))T(\bar{v}+z)+\gamma_{kr}(\bar{v}+z)\mathcal{D}T(\bar{v}+z)
\end{align*}
When $r=0, k$, $\gamma_{kr}(v+z)$ is symmetric function in variables $v_{ki}, v_{kj}$.
Then $\Dv(\gamma_{kr}(v+z))=0$, one has that $c_{kr}(T(\bar{v}+z))=\gamma_{kr}(\bar{v}+z)T(\bar{v}+z)$.
When $1\leq r \leq k-1$, $\gamma_{kr}(v+z)$ is not symmetric.
$\Dv(\gamma_{kr}(v+z))=\frac{a(q-q^{-1})^2[z_{ki}-z_{kj}]_q }{2}\neq 0$ where $a$ is the coefficient of $q^{(v_{ki}+z_{ki})-(v_{kj}+z_{kj})}$ in $\gamma_{kr}(v+z)$.
\item[(iii)]  This part follows from (i) and (ii).
\end{itemize}
\end{proof}

The following statement follows directly from the action of generators of $U_q$ and the Gelfand-Tsetlin subalgebra.

\begin{lemma}\label{Gamma k separates tableaux}
Let $\Gamma_{k-1}$ be the subalgebra of $\Gamma$ generated by $\{c_{rs}:1\leq s\leq r\leq k-1\}$.
 For any $m\in\mathbb{Z}_{\geq 0}$ let $R_{m}$ be the set of $z\in\mathbb{Z}^{\frac{n(n-1)}{2}}$ such that $|z_{ki}-z_{kj}|=m$.
\begin{itemize}
\item[(i)]
 If $z,z'\in\mathbb{Z}^{\frac{n(n-1)}{2}}$ are such that $z_{rs}\neq z'_{rs}$ for some $1\leq s\leq r\leq k-1$. Then, $\Gamma_{k-1}$ separates the tableaux $T(\bar{v}+z)$ and $T(\bar{v}+z')$, that is, there exist $c\in\Gamma_{k-1}$ and $\gamma\in\mathbb{C}$ such that $(c-\gamma)T(\bar{v}+z)=0$ but $(c-\gamma)T(\bar{v}+z')\neq 0$.
\item[(ii)]
 If $z\in R_{m}$ then  there exists $\bar{z}\in R_{m+1}$ such that $T(\bar{v}+z)$ appears with non-zero coefficient in the decomposition of $f_{k}f_{k-1}\cdots f_{k-t}T(\bar{v}+\bar{z})$ for some $t\in\{0,1,\ldots,k-1\}$.
\end{itemize}
\end{lemma}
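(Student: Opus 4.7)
For part (i), my plan is to invoke Lemma \ref{generators of Gamma acting in many tableaux}(ii) together with the explicit formula \eqref{eigenvalues of gamma_mk} to show that each generator $c_{rs}$ of $\Gamma_{k-1}$ (with $1 \leq s \leq r \leq k-1$) acts on $T(\bar{v} + z)$ as the scalar $\gamma_{rs}(\bar{v} + z)$. Because $r \leq k-1$, the eigenvalue $\gamma_{rs}(v+z)$ depends only on the row-$r$ entries and so is constant in the formal variables $x, y$; applying $\mathcal{D}^{\bar{v}}$ therefore yields multiplication by $\gamma_{rs}(\bar{v} + z)$ on both $T(\bar{v} + z)$ and $\mathcal{D} T(\bar{v} + z)$. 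Viewing the scalars $\gamma_{r,0}(\bar{v} + z), \ldots, \gamma_{r,r}(\bar{v} + z)$ as $q$-analogues of elementary symmetric polynomials in $q^{\bar{v}_{r,s} + z_{r,s}}$, they jointly determine the multiset of row-$r$ entries. If these multisets differ for $\bar{v}+z$ and $\bar{v}+z'$ at some $r \leq k-1$, then some $\gamma_{rs}$ already separates the two tableaux. Otherwise the multisets agree in every such row; but since the basis tableaux lie in $\mathcal{B}_{\mathcal{C}}(T(\bar{v}))$, admissibility of $\mathcal{C}$ fixes the ordering within each indecomposable subset of $\mathfrak{V}(\mathcal{C})$ restricted to row $r$, while entries in different indecomposable subsets lie in distinct $\mathbb{Z}$-cosets. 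Consequently equality of multisets forces equality as ordered tuples, contradicting $z_{rs} \neq z'_{rs}$.

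For part (ii), I would proceed constructively. Without loss of generality assume $z_{ki} - z_{kj} = m \geq 0$ and set $\bar{z} := z + \delta^{ki}$, so $\bar{z} \in R_{m+1}$. First attempt $t = 0$: in the expansion of $f_k T(\bar{v} + \bar{z})$ via the Gelfand-Tsetlin formulas, the tableau $T(\bar{v} + z)$ appears with coefficient $\frac{\prod_{l} [\bar{v}_{k-1, l} + \bar{z}_{k-1, l} - \bar{v}_{ki} - \bar{z}_{ki}]_q}{\prod_{i' \neq i} [\bar{v}_{ki'} + \bar{z}_{ki'} - \bar{v}_{ki} - \bar{z}_{ki}]_q}$. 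The denominator is nonzero: factors with $i' \notin \{i, j\}$ are generic by the $(1, \mathcal{C})$-singularity, while the $i' = j$ factor equals $[-(m+1)]_q$, which is nonzero for $q$ not a root of unity. If in addition the numerator is nonzero, we are done with $t = 0$.

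If some numerator factor $[\bar{v}_{k-1, l} + \bar{z}_{k-1, l} - \bar{v}_{ki} - \bar{z}_{ki}]_q$ vanishes, I would extend to $t \geq 1$. Apply $f_{k-t} f_{k-t+1} \cdots f_{k-1}$ to $T(\bar{v} + \bar{z})$ first, which modifies rows $k-1, \ldots, k-t$ by $-\delta^{k-s, l_{s+1}}$ (with indices $l_{s+1}$ to be chosen). Setting $l_2 = l$ shifts the offending row-$(k-1)$ entry so that the corresponding bracket in the subsequent $f_k$-coefficient ceases to vanish; iterating downward through rows $k-2, \ldots, k-t$ handles any further vanishing in the intermediate coefficients. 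Each intermediate step contributes a coefficient that is nonzero for generic $\bar{v}$ provided the intermediate tableau still lies in $\mathcal{B}_{\mathcal{C}}(T(\bar{v}))$. Thus for some $t \in \{0, 1, \ldots, k-1\}$ and some index choices $l_1 = i, l_2, \ldots, l_{t+1}$ the total product coefficient is nonzero, so $T(\bar{v} + z)$ appears with nonzero coefficient in $f_k f_{k-1} \cdots f_{k-t} T(\bar{v} + \bar{z})$.

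The principal obstacle is the technical verification in part (ii): one must show that at each stage a valid index $l_{s+1}$ exists for which the relevant $q$-bracket is nonzero and the intermediate tableau $T(\bar{v} + \bar{z} - \delta^{k-t, l_{t+1}} - \cdots - \delta^{k-s, l_{s+1}})$ continues to satisfy $\mathcal{C}$ so that the coefficient formula \eqref{c-for2} remains applicable. This rests on a careful combinatorial analysis of the admissibility conditions and on the uniqueness of the singular pair $(k, i), (k, j)$ in the $(1, \mathcal{C})$-singular tableau $T(\bar{v})$.
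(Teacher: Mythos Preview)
The paper itself gives no proof of this lemma at all: the sentence preceding it simply states that it ``follows directly from the action of generators of $U_q$ and the Gelfand--Tsetlin subalgebra.'' So your proposal is already far more detailed than what the authors supply.

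Your argument for part~(i) is sound and is the natural one. Using Lemma~\ref{generators of Gamma acting in many tableaux}(ii) to see that $c_{rs}$ with $r\le k-1$ acts by the scalar $\gamma_{rs}(\bar v+z)$, then recognising the $\gamma_{r,0},\dots,\gamma_{r,r}$ as determining the multiset of $q^{l_{r1}},\dots,q^{l_{rr}}$, and finally invoking the $\mathcal C$-realization condition to upgrade multiset equality to tuple equality, is exactly the right chain of reasoning.

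For part~(ii) there is a concrete gap. You fix $\bar z = z+\delta^{ki}$ once and for all and then, when the $t=0$ coefficient vanishes, propose to prepend $f_{k-1},\dots,f_{k-t}$ while keeping the same $\bar z$. But each $f_{k-s}$ lowers an entry in row $k-s$, so the tableau you reach after applying $f_k f_{k-1}\cdots f_{k-t}$ to $T(\bar v+\bar z)$ is
\[
T\bigl(\bar v + z - \delta^{k-1,l_2}-\cdots-\delta^{k-t,l_{t+1}}\bigr),
\]
which is \emph{not} $T(\bar v+z)$. The fix is that $\bar z$ must itself depend on $t$ and on the chosen indices: one should take
\[
\bar z \;=\; z+\delta^{ki}+\delta^{k-1,l_2}+\cdots+\delta^{k-t,l_{t+1}},
\]
which still lies in $R_{m+1}$ since only the $(k,i)$-entry of row $k$ has been altered. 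With this correction your strategy goes through: applying $f_{k-t},\dots,f_{k-1},f_k$ in succession and picking the terms $-\delta^{k-t,l_{t+1}},\dots,-\delta^{k-1,l_2},-\delta^{k,i}$ lands exactly at $T(\bar v+z)$, and the indices $l_2,\dots,l_{t+1}$ can be chosen so that each intermediate numerator is nonzero. (Note also that the product you wrote, $f_{k-t}f_{k-t+1}\cdots f_{k-1}$, is in the reverse order from the $f_{k-1}\cdots f_{k-t}$ that remains after peeling off $f_k$ in the lemma's expression; be careful that $f_{k-t}$ acts first.) The residual ``principal obstacle'' you flag --- checking that each intermediate tableau remains in $\mathcal B_{\mathcal C}(T(\bar v))$ --- is genuine and is precisely the combinatorial content the paper is sweeping under ``follows directly.''
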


Now we can prove our second main result:

 \begin{theorem}\label{GT module structure}
The module $V_{\mathcal{C}}(T(\bar{v}))$ is an $(1,\mathcal{C})$-singular Gelfand-Tsetlin module. Moreover for any $z\in\mathbb{Z}^{\frac{n(n-1)}{2}}$ and any $1\leq r \leq s\leq n$ the following identities hold.
\begin{equation}\label{Gamma acting on T}
c_{rs}(T(\bar{v}+z))=\Dv([x-y]_qc_{rs}(T(v+z)))
\end{equation}
\begin{equation}\label{Gamma acting on DT}
c_{rs}(\mathcal{D}T(\bar{v}+z))= \Dv(c_{rs}(T(v+z))), \text{ if } z\neq \tau(z).
\end{equation}
\end{theorem}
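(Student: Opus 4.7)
The plan is to verify the two action formulas (\ref{Gamma acting on T}) and (\ref{Gamma acting on DT}) first, and then use them to exhibit the generalized $\Gamma_q$-eigenspace decomposition required by Definition \ref{definition-of-GZ-modules}; the $U_q$-module structure on $V_{\mathcal{C}}(T(\bar v))$ itself is already supplied by Theorem \ref{thm-main}.

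For the identities I would argue by cases on $r$. When $1\le s\le r\le k$, both follow immediately from Lemma \ref{generators of Gamma acting in many tableaux}(ii), which has no restriction on $z$. When $r>k$ and $|z_{ki}-z_{kj}|\ge r$, they follow from Lemma \ref{generators of Gamma acting in many tableaux}(i) on choosing $m=n-r$. The residual case $r>k$ with $|z_{ki}-z_{kj}|<r$ is the heart of the proof. Here I would use that $c_{rs}\in Z_r\subset (U_r)_q$ commutes with $f_k$ for any $k<r$, and proceed by downward induction on $M=|z_{ki}-z_{kj}|$, starting from the already-proved base $M\ge r$. Given $z$ with $|z_{ki}-z_{kj}|=M<r$, I pick $\bar z=z+\delta^{ki}$ or $\bar z=z+\delta^{kj}$ (whichever makes $|\bar z_{ki}-\bar z_{kj}|=M+1$). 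The expansion $f_k T(\bar v+\bar z)=\sum_p\alpha_p T(\bar v+\bar z-\delta^{kp})$ then has all coefficients $\alpha_p$ regular at $\bar v$ (the only dangerous denominator $[\bar z_{ki}-\bar z_{kj}]_q$ does not vanish), the target tableau $T(\bar v+z)$ appears with nonzero coefficient, and every other summand has $|z'_{ki}-z'_{kj}|\ge M+1$ and is thus already under control. Applying $c_{rs}f_k=f_kc_{rs}$ to $T(\bar v+\bar z)$, exploiting that $\gamma_{rs}(\bar v+w)$ depends only on row $r$ and so takes a common value on every summand since $f_k$ fixes row $r$, a coefficient comparison isolates $c_{rs}T(\bar v+z)=\gamma_{rs}(\bar v+z)T(\bar v+z)$, which is (\ref{Gamma acting on T}). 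The companion identity (\ref{Gamma acting on DT}) follows from the parallel computation applied to $\mathcal DT(\bar v+\bar z)$, using the explicit formula for $f_k\mathcal DT$ supplied by Theorem \ref{thm-main}.

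Once these formulas are in place, the Gelfand-Tsetlin module property is automatic: every $T(\bar v+z)$ is a joint eigenvector of all $c_{rs}$ with eigenvalue $\gamma_{rs}(\bar v+z)$, and by Lemma \ref{lem-ck2}(iii) each $\mathcal DT(\bar v+z)$ sits in a generalized eigenspace of dimension at most $2$, paired with $T(\bar v+z)$ only through those $c_{kr}$, $1\le r\le k-1$, whose eigenvalue function is asymmetric in $v_{ki}, v_{kj}$. The $(1,\mathcal{C})$-singular nature is then just the standing assumption on $\bar v$. The main obstacle in carrying out this plan is the inductive step: one has to verify that the coefficient of $T(\bar v+z)$ in $f_k T(\bar v+\bar z)$ is nonzero for a suitable choice of $\bar z$ so that the coefficient comparison really isolates $c_{rs}T(\bar v+z)$, and one has to justify that the commutation $[c_{rs},f_k]=0$ descends cleanly to the action on $V_{\mathcal{C}}(T(\bar v))$ despite the $[x-y]_q^{-1}$ poles that appear in intermediate compositions of generators.
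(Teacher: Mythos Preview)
Your plan is close to the paper's own argument, and your second worry can be dismissed at once: Theorem~\ref{thm-main} already makes $V_{\mathcal C}(T(\bar v))$ a $U_q$-module, so every identity in $U_q$---in particular $c_{rs}f_k=f_kc_{rs}$ for $r>k$---holds as an identity of operators on it, with no further pole bookkeeping required.

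Your first obstacle, however, is genuine, and it is exactly where your argument diverges from the paper's. With a single $f_k$, the coefficient of $T(\bar v+z)$ in $f_kT(\bar v+\bar z)$ is $f_{ki}(\bar v+\bar z)$, whose numerator $\prod_m[\,(\bar v+\bar z)_{ki}-(\bar v+\bar z)_{k-1,m}\,]_q$ can vanish: the $(1,\mathcal C)$-singularity hypothesis constrains only differences within a row, not differences between rows $k$ and $k-1$, and neither choice $\bar z=z+\delta^{ki}$ nor the analogous $e_k$-variant is guaranteed to give a nonzero coefficient. The paper sidesteps this with Lemma~\ref{Gamma k separates tableaux}(ii), which asserts that $T(\bar v+z)$ appears with nonzero coefficient in $f_kf_{k-1}\cdots f_{k-t}T(\bar v+\bar z)$ for a suitable $\bar z\in R_{m+1}$ and some $t\le k-1$; the extra factors first move entries in lower rows so that the final $f_k$-coefficient survives. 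You need this lemma, or an equivalent device, to close the induction.

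There is also a structural difference worth noting. Rather than establishing the eigenvalue identities first, the paper proves the Gelfand--Tsetlin property directly: for $|z_{ki}-z_{kj}|\ge n$ each $T(\bar v+z)$ is a $\Gamma_q$-eigenvector by Lemma~\ref{generators of Gamma acting in many tableaux}(i), so the cyclic submodule it generates is Gelfand--Tsetlin by Lemma~\ref{lem-cyclic-Gelfand-Tsetlin}; the descending induction via Lemma~\ref{Gamma k separates tableaux} then shows that the sum $W$ of these submodules contains every ordinary tableau, and the derivative tableaux are handled by repeating the argument in the quotient $V_{\mathcal C}(T(\bar v))/W$. Your route---formulas first, decomposition afterwards---is equally valid once the inductive step is repaired, and has the merit of making the explicit $\Gamma_q$-action visible at every stage rather than inferring it at the end.
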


\begin{proof}

Let $R_{\geq n} := \cup_{m\geq n} R_m$. For any $z\in R_{\geq n}$ consider the submodule $W_{z}$ of $V_{\mathcal{C}}(T(\bar{v}))$ generated by $T(\bar{v}+z)$. By Lemma \ref{generators of Gamma acting in many tableaux}(i)(a), $T(\bar{v}+z)$ is a common eigenvector of all generators of $\Gamma$ and thus $W_{z}$ is a Gelfand-Tsetlin module by Lemma \ref{lem-cyclic-Gelfand-Tsetlin}. Then $W=\sum_{z\in R_{\geq n}}W_{z}$  is also a Gelfand-Tsetlin module. We first show that $W$ contains all tableau $T(\bar{v}+z)$ for any $z\in\mathbb{Z}^{\frac{n(n-1)}{2}}$.
   Indeed, assume that $|z_{ki}-z_{kj}|= n-1$ and consider $T(\bar{v}+z)$. Then, by Lemma \ref{Gamma k separates tableaux} there exists $z'\in R_n$ and a nonzero $x\in U_{q}(\gl_{k+1})$ such that  $xT(\bar{v}+z')=T(\bar{v}+z)$.
Let $c\in \Gamma$ be a central element and $(c-\gamma)T(\bar{v}+z')=0$ for some complex $\gamma$. Then $(c-\gamma)xT(\bar{v}+z')=0$ $=(c-\gamma) (T(\bar{v}+z))$.
  Continuing analogously with the sets
  $R_{n-3}, \ldots, R_0$ we show that any tableau $T(\bar{v}+z)$  belongs to $W$.
%

  Consider the  quotient $\overline{W}=V(T(\bar{v}))/W$.
 The vector $\mathcal{D}T(\bar{v}+z)+W$ of $\overline{W}$ is a common eigenvector of $\Gamma$ by Lemma \ref{generators of Gamma acting in many tableaux}(i)(b) for any $z\in R_n$. We can repeat now the argument above substituting everywhere the tableaux
  $T(\bar{v}+z)$ by $\mathcal{D}T(\bar{v}+z)$. Hence, $\overline{W}=\sum_{z\in R_n}\overline{W}_{z}$, where
  $\overline{W}_{z}$ denotes the submodule of $\overline{W}$ generated by  $\mathcal{D}T(\bar{v}+z) +W$.
  By Lemma \ref{lem-cyclic-Gelfand-Tsetlin} we conclude that $\overline{W}$ is a Gelfand-Tsetlin module. Therefore,
  $V_{\mathcal{C}}(T(\bar{v}))$ is a Gelfand-Tsetlin module with action of $\Gamma$ given by (\ref{Gamma acting on T}) and (\ref{Gamma acting on DT}).
\end{proof}

As a consequence of Theorem \ref{GT module structure} we have $\dim V_{\mathcal{C}}(T(\bar{v}))(\sm)\leq 2$ for any $\sm\in\Sp\Ga$. Here, a  maximal ideal $\sm$ of $\Gamma_q$ is determined by the corresponding basis tableau via the formulas in Theorem \ref{GT module structure}. If the entries of this tableau which define the $1$-singular pair are distinct then the dimension of $V_{\mathcal{C}}(T(\bar{v}))(\sm)$ is exactly $2$.  Moreover, Lemma \ref{lem-ck2} implies that the generators $c_{ki}$ of $\Gamma_q$ have Jordan cells of size $2$ on $V_{\mathcal{C}}(T(\bar{v}))$.


Finally, we have our third main result

\begin{theorem}\label{thm-when L irred}
The module $V_{\mathcal{C}}(T(\bar{v}))$ is irreducible whenever  $\mathcal{C}$ is a maximal set of relations for $T(\bar{v})$ and $\bar{v}_{rs}-\bar{v}_{r-1,t}\notin \mathbb{Z}+\frac{1(q)}{2}$ for any $1\leq t <r\leq n$, $1\leq s\leq r$ such that $(r,s)\notin \mathfrak{V}(\mathcal{C})$ or $(r-1,t)\notin \mathfrak{V}(\mathcal{C})$ .
\end{theorem}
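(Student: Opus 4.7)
The plan is to show that any nonzero $U_q$-submodule $N$ of $V_{\mathcal{C}}(T(\bar v))$ coincides with $V_{\mathcal{C}}(T(\bar v))$. I will proceed in three stages: first extract a standard tableau $T(\bar v+z_0)\in N$, then propagate to every $T(\bar v+z)\in\mathcal{B}_{\mathcal{C}}(T(\bar v))$ via the $U_q$-action, and finally produce and propagate the derivative tableaux. By Theorem~\ref{GT module structure}, $N$ is itself a Gelfand-Tsetlin module, so it meets some generalized $\Gamma_q$-weight space $V_{\mathcal{C}}(T(\bar v))(\sm)$ nontrivially. Each such weight space has dimension $1$ (when the representing tableau satisfies $\tau(z)=z$) or $2$ (spanned by $T(\bar v+z)$ and $\mathcal{D}T(\bar v+z)$ when $\tau(z)\neq z$). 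In the latter case, Lemma~\ref{lem-ck2} shows that some $c_{kr}$ with $1\leq r\leq k-1$ acts as a nontrivial Jordan block whose kernel is $\mathbb{C}\cdot T(\bar v+z)$, so applying $c_{kr}-\gamma_{kr}(\bar v+z)$ to any nonzero element of $N\cap V_{\mathcal{C}}(T(\bar v))(\sm)$ lying outside this kernel produces a nonzero scalar multiple of $T(\bar v+z)$. Either way, $N\ni T(\bar v+z_0)$ for some $z_0$.

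For the second stage, observe that when $(\bar v+z)_{ki}\neq(\bar v+z)_{kj}$ the denominators in the generic formulas $g(T(v+z))=\sum_l c_l(v)T(v+w_l)$ are regular at $\bar v$, and the formula of Theorem~\ref{thm-main} reduces to $g(T(\bar v+z))=\sum_l c_l(\bar v)T(\bar v+w_l)$, i.e., the classical Gelfand-Tsetlin action evaluated at $\bar v$. Under the maximality of $\mathcal{C}$ and the non-integrality hypothesis $\bar v_{rs}-\bar v_{r-1,t}\notin\mathbb{Z}+\tfrac{1(q)}{2}$, the same nonvanishing analysis of the coefficients $c_l(\bar v)$ as in \cite{FRZ2} applies and yields, by the standard path argument, that $N$ contains $T(\bar v+z)$ for every $z$ off the singular hyperplane. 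Tableaux on the hyperplane are then reached from a suitable off-hyperplane neighbor in one step of $f_k$: for $z$ with $z_{ki}=z_{kj}+1$ the coefficient of $T(\bar v+z-\delta^{ki})$ in $f_k(T(\bar v+z))$ is nonzero (the potentially dangerous factor $[l_{ki}-l_{kj}]_q$ evaluates to $[1]_q\neq0$), and since the other summands lie in distinct $\Gamma_q$-weight spaces, a $\Gamma_q$-projection isolates the desired hyperplane tableau in $N$.

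For the third stage, pick $z_*$ on the hyperplane and apply $e_k$ to $T(\bar v+z_*)\in N$: the term of $e_k(T(v+z_*))$ raising entry $(k,i)$ carries a simple pole $1/[v_{ki}-v_{kj}]_q$ on the hyperplane, which the prefactor $[x-y]_q$ in Theorem~\ref{thm-main} cancels. Applying $\mathcal{D}^{\bar v}$ produces a term $d_i\,\mathcal{D}T(\bar v+z_*+\delta^{ki})$ where $d_i$ is the evaluated residue, a nonzero product of $[\cdot]_q$-factors by maximality and non-integrality, together with standard-tableau contributions at various weights. A $\Gamma_q$-projection onto the weight of $z_*+\delta^{ki}$ leaves $\mathcal{D}^{\bar v}(d_i)\,T(\bar v+z_*+\delta^{ki})+d_i\,\mathcal{D}T(\bar v+z_*+\delta^{ki})$; the $T$-part already belongs to $N$ by the second stage, so $\mathcal{D}T(\bar v+z_*+\delta^{ki})\in N$. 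Finally, on derivative tableaux Theorem~\ref{thm-main} gives $g(\mathcal{D}T(\bar v+z'))=\sum_l\mathcal{D}^{\bar v}(c_l)T(\bar v+w_l)+\sum_l c_l(\bar v)\mathcal{D}T(\bar v+w_l)$: the first sum is already in $N$, and the second is the classical Gelfand-Tsetlin action on derivatives with the same nonvanishing coefficients $c_l(\bar v)$, so the same navigation used in the second stage now reaches every remaining basis vector $\mathcal{D}T(\bar v+z')$.

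The hardest part will be the third stage: one must verify cleanly that the residues producing $\mathcal{D}$-tableaux are nonzero (this comes down to recognizing the residue of the singular coefficient as a product of $[\cdot]_q$-factors formally identical to the generic $c_l(\bar v)$, so that the nonvanishing mechanism of \cite{FRZ2} transfers), and that the contaminating standard-tableau contributions in every such formula are always recoverable from what has already been placed in $N$ by the second stage, so that $\Gamma_q$-weight projection cleanly isolates the $\mathcal{D}$-components. The bookkeeping of $T$ versus $\mathcal{D}T$ contributions under the simultaneous operation of $[x-y]_q$-multiplication and $\mathcal{D}^{\bar v}$ is where the technical content of the argument concentrates.
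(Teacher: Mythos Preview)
Your three-stage strategy is correct and is essentially the argument the paper has in mind, though the paper's own proof is extremely terse: it simply asserts that under the hypotheses each $T(\bar v+z)$ with $z\neq\tau(z)$ generates $V_{\mathcal C}(T(\bar v))$ (and similarly for $\mathcal D T(w)$), leaving the extraction step and the propagation mechanism implicit. Your Stage~1 (using Lemma~\ref{lem-ck2} to pull a plain tableau out of any nonzero submodule) is exactly the missing ingredient the paper does not spell out, and your Stages~2--3 unpack the generation claim the paper states without justification.

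One small point to tighten in Stage~3: when you apply $e_k$ to an on-hyperplane tableau $T(\bar v+z_*)$, both the $l=i$ and the $l=j$ summands of $e_k(T(v+z_*))$ carry a simple pole, so both produce derivative tableaux. Since $\tau(z_*+\delta^{ki})=z_*+\delta^{kj}$, these land on the same basis element $\mathcal D T(\bar v+z_*+\delta^{ki})$ (up to sign), and because $[x-y]_q e_{kj}(v+z_*)=-\bigl([x-y]_q e_{ki}(v+z_*)\bigr)^{\tau}$ the two contributions add rather than cancel. Your conclusion that a nonzero $\mathcal D$-term appears is therefore correct, but the bookkeeping should include the $l=j$ contribution. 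Similarly, in the $\mathcal D$-propagation you should note that on-hyperplane $\mathcal D$-tableaux vanish by definition, so paths must detour around the hyperplane; this is harmless because $(k,i),(k,j)\notin\mathfrak V(\mathcal C)$ lets you move $z_{ki},z_{kj}$ freely with nonvanishing coefficients under the non-integrality hypothesis.
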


\begin{proof}

Let $z\in\mathbb{Z}^{\frac{n(n-1)}{2}}$ such that $z\neq\tau(z)$ and $w=\bar{v}+z$. If $w_{rs}-w_{r-1,t}\notin \frac{1(q)}{2} +\mathbb{Z}_{\geq 0}$ for any $r,s,t$, then the module $V_{\mathcal{C}}(T(\bar{v}))$ is generated by the two tableau $T(w)$ and $\mathcal{D}T(w)$.
Let $z\in\mathbb{Z}^{\frac{n(n-1)}{2}}$ such that $z\neq\tau(z)$ and $w=\bar{v}+z$. If $w_{rs}-w_{r-1,t}\notin \frac{1(q)}{2}+\mathbb{Z}_{\geq -1}$ for any $r,s,t$, then $V_{\mathcal{C}}(T(\bar{v}))$ is generated by $\mathcal{D}T(w)$. If $z\neq\tau(z)$ and $\bar{v}_{rs}-\bar{v}_{r-1,t}\notin \frac{1(q)}{2}+\mathbb{Z}$ for any $1\leq t <r\leq n$, $1\leq s\leq r$  then $T(\bar{v}+z)$ generates $V_{\mathcal{C}}(T(\bar{v}))$.
This proves the statement.

\end{proof}

\section{New $(1,\mathcal{C})$-singular Gelfand-Tsetlin modules for $\gl_n$}

Three main results proved above, Theorem \ref{thm-main}, Theorem \ref{GT module structure} and Theorem \ref {thm-when L irred} also hold when $q=1$. We will use the same notation as before. The proofs  are analogous to the proofs of theorems above. Therefore, we obtain a new family of  $(1,\mathcal{C})$-singular Gelfand-Tsetlin modules for $\gl_n$ for any admissible set of relations $\mathcal{C}$ plus one additional singularity. These modules are irreducible for any maximal set $\mathcal{C}$.  Namely, we have

\begin{theorem}\label{thm-gl(n)}
Let $\mathcal{C}$ be an admissible set of relations and $T(\bar{v})$ a $(1,\mathcal{C})$-singular tableau. Then
\begin{itemize}
\item[(i)] $V_{\mathcal{C}}(T(\bar{v}))$ is a  $(1,\mathcal{C})$-singular Gelfand-Tsetlin $\gl_n$-module
with the action of the generators of $\mathfrak{gl}_{n}$ given by
\begin{align*}
g(T(\bar{v} + z))=&\  \mathcal{D}^{\bar{v}}((x - y)g(T(v + z)))\\
g(\mathcal{D}T(\bar{v} + z')))=&\ \mathcal{D}^{\bar{v}} (g(T(v + z'))),
\end{align*}
where $\mathcal{D}^{\bar{v}}(f) = \frac{1}{2}\left(\frac{\partial f}{\partial x}-\frac{\partial f}{\partial y}\right)(\bar{v})$. In the particular case of $\mathcal{C}=\emptyset$ we recover the $1$-singular modules constructed in \cite{FGR3}.
\item[(ii)] $V_{\mathcal{C}}(T(\bar{v}))$ is irreducible whenever $\mathcal{C}$ is the maximal set of relations satisfied by $T(\bar{v})$.
\end{itemize}
\end{theorem}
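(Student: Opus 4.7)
The plan is to reduce the proof to the results already established in the quantum case by checking that all ingredients specialize consistently as $q\to 1$. First observe that the quantum Gelfand-Tsetlin formulas (\ref{Gelfand-Tsetlin formulas}) and the coefficients (\ref{c-for1})--(\ref{c-for3}) become, in the limit $q\to 1$, the classical $\gl_n$-Gelfand-Tsetlin formulas. Likewise, the normalization $\frac{q-q^{-1}}{4\ln q}$ in $\mathcal{D}^{\bar{v}}$ was chosen precisely so that $\mathcal{D}^{\bar{v}}([x-y]_q)=1$ at the singular point; its limit $\tfrac12$ is exactly the constant that makes $\mathcal{D}^{\bar{v}}(x-y)=1$ in the classical setting. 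Hence the map $\mathcal{D}^{\bar{v}}\colon \mathcal{F}_{ij}\otimes V_{\mathcal{C}}(T(v))\to V_{\mathcal{C}}(T(\bar{v}))$, the fundamental identity $\mathcal{D}^{\bar{v}}((x-y)f\,T(v+z))=f(\bar{v})T(\bar{v}+z)+\mathcal{D}^{\bar{v}}(f)T(\bar{v}+z)$, and the definition of the derivative tableaux all specialize coherently.

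For part (i) I would reproduce, line by line, the proofs of Lemma \ref{dij-commute}, Proposition \ref{t-v-rep}, Proposition \ref{d-t-v-rep}, and Theorem \ref{thm-main}, replacing $[x-y]_q$ by $(x-y)$ and replacing the quantum generators $e_r,f_r,q^{\epsilon_r}$ and their quantum Serre relations by Chevalley generators $E_{r,r+1},E_{r+1,r},E_{rr}$ and classical Serre relations. The argument rests on two purely formal facts which are identical in both settings: the operator $\mathcal{D}^{\bar{v}}$ behaves as a Leibniz-type map with respect to multiplication by $\mathcal{F}_{ij}$-valued functions, and the $\mathfrak{gl}_n$-Gelfand-Tsetlin coefficients obtained by iterating the action on $(x-y)T(v+z)$ have poles only on hyperplanes $v_{rs}=v_{rt}$, so the resulting coefficients lie in $\mathcal{F}_{ij}$ whenever the generator applied does not cause both entries $v_{ki},v_{kj}$ to collide in a denominator. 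The case $\mathcal{C}=\emptyset$ recovers the $1$-singular modules of \cite{FGR3} by direct comparison of the two formulas, since in that case there are no extra vanishing conditions on the set $\mathcal{B}_{\mathcal{C}}(T(\bar{v}))$.

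To transfer the Gelfand-Tsetlin structure, I would then repeat the reasoning of Theorem \ref{GT module structure} using the classical generators of the centers $Z_m$ of $U(\gl_m)$ (e.g.\ Capelli elements), which are the $q\to 1$ limits of the $c_{mk}$. Lemmas \ref{generators of Gamma acting in many tableaux} and \ref{lem-ck2} carry over, producing Jordan cells of size $2$ for the appropriate central generators and yielding $\dim V_{\mathcal{C}}(T(\bar{v}))(\sm)\le 2$; then Lemma \ref{Gamma k separates tableaux} and the filtration argument of Theorem \ref{GT module structure} show that $V_{\mathcal{C}}(T(\bar{v}))$ is a Gelfand-Tsetlin module.

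For part (ii), the argument in the proof of Theorem \ref{thm-when L irred} works without change. Under maximality of $\mathcal{C}$, the classical $\gl_n$-Gelfand-Tsetlin coefficients appearing when raising/lowering operators are applied to $T(\bar{v}+z)$ or $\mathcal{D}T(\bar{v}+z)$ (for $\tau(z)\neq z$) do not vanish whenever the relevant pair of entries is not $\mathcal{C}$-related, so these tableaux cyclically generate the module. The main obstacle I foresee is bookkeeping rather than conceptual: one must verify carefully that every appeal in the quantum proof to the rationality of coefficients and to the non-vanishing of $[\,\cdot\,]_q$-brackets translates faithfully into a non-vanishing statement for ordinary differences, which holds because the conditions $\bar{v}_{rs}-\bar{v}_{r-1,t}\notin \mathbb{Z}+\tfrac{1(q)}{2}$ reduce, at $q=1$, to $\bar{v}_{rs}-\bar{v}_{r-1,t}\notin \mathbb{Z}$ (since $1(q)=\{0\}$ when $q=1$ is interpreted in the limiting sense used here).
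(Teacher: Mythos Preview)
Your proposal is correct and follows essentially the same approach as the paper: the paper's entire proof of this theorem is the one-line remark that ``the proofs are analogous to the proofs of theorems above'' (i.e.\ Theorems~\ref{thm-main}, \ref{GT module structure}, and \ref{thm-when L irred}), and you have simply spelled out in more detail how each of those arguments specializes at $q=1$. One small point worth flagging: the hypothesis in part~(ii) as stated here is weaker than that of Theorem~\ref{thm-when L irred} (which also assumes $\bar{v}_{rs}-\bar{v}_{r-1,t}\notin \mathbb{Z}+\tfrac{1(q)}{2}$ for the relevant pairs), so transferring the quantum proof verbatim really yields the irreducibility statement under that additional non-integrality condition---you noticed this and mentioned the reduced condition $\bar{v}_{rs}-\bar{v}_{r-1,t}\notin\mathbb{Z}$, which is appropriate, but you should not expect maximality of~$\mathcal{C}$ alone to furnish it.
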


Fix $(1,\mathcal{C})$-singular tableau $T(\bar{v})$ and consider the corresponding maximal ideal $\sm=\sm_{T(\bar{v})}$ of the Gelfand-Tsetlin subalgebra $\Gamma$ \cite{FGR3}.  Since $\dim U(\gl_n)/U(\gl_n)\sm\leq 2$ there can be at most two non-isomorphic irreducible modules $V$ with $V(\sm)\neq 0$. It is not immediately clear why both of them are the subquotients of $V_{\mathcal{C}}(T(\bar{v}))$. But this fact can be shown analogously to the proof of the similar statement for the $1$-singular case in \cite{FGR5}, Theorem 5.2. So, we have

\begin{theorem}\label{thm-exhaust} Let $T(\bar{v})$ be a $(1,\mathcal{C})$-singular tableau and $\sm=\sm_{T(\bar{v})}$.
Then any  irreducible Gelfand-Tsetlin $\gl_n$-module $V$ with $V(\sm)\neq 0$  is isomorphic to a subquotients of $V_{\mathcal{C}}(T(\bar{v}))$.
\end{theorem}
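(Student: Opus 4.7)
Following the template of \cite{FGR5}, Theorem 5.2, the starting point is the bound already noted in the excerpt: $\dim U(\gl_n)/U(\gl_n)\sm \leq 2$, which via the Harish-Chandra formalism of \cite{DFO} implies that there are at most two isomorphism classes of irreducible Gelfand-Tsetlin $\gl_n$-modules $V$ with $V(\sm) \neq 0$. It therefore suffices to exhibit two pairwise non-isomorphic irreducible subquotients of $V_{\mathcal{C}}(T(\bar{v}))$ whose $\sm$-component is nonzero; any other irreducible $V$ with $V(\sm) \neq 0$ must then coincide with one of them, proving the theorem.

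To produce these subquotients I would use the filtration made explicit in the proof of Theorem \ref{GT module structure}: the $U(\gl_n)$-submodule $W \subset V_{\mathcal{C}}(T(\bar{v}))$ generated by all regular tableaux $T(\bar{v}+z)$, together with the quotient $\overline{W} = V_{\mathcal{C}}(T(\bar{v}))/W$, which is generated by the images of the derivative tableaux $\mathcal{D}T(\bar{v}+z)$. The tableau $T(\bar{v})$ is a genuine $\sm$-eigenvector lying in $W(\sm)$, so the cyclic submodule it generates has at least one irreducible subquotient $L_1$ with $L_1(\sm) \neq 0$. For a suitable $z$ with $\tau(z) \neq z$, the image of $\mathcal{D}T(\bar{v}+z)$ in $\overline{W}$ lies in the generalized $\sm$-eigenspace by Lemma \ref{lem-ck2}, and the cyclic subquotient it generates yields an irreducible subquotient $L_2$ of $V_{\mathcal{C}}(T(\bar{v}))$ with $L_2(\sm) \neq 0$.

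The remaining step is to verify $L_1 \not\cong L_2$. Here Lemma \ref{lem-ck2} is decisive: on the derivative vector, the generators $c_{kr}$ of $\Gamma$ with $1 \leq r \leq k-1$ act by nontrivial Jordan blocks of size $2$, while on $T(\bar{v})$ and its $\Gamma$-translates in $L_1$ they act diagonalizably. Any isomorphism $L_1 \cong L_2$ would restrict to an isomorphism of $\Gamma$-modules $L_1(\sm) \cong L_2(\sm)$ and would therefore preserve the Jordan type, giving a contradiction. Combining this with the dichotomy from the first paragraph, $L_1$ and $L_2$ exhaust the isomorphism classes, so every irreducible Gelfand-Tsetlin $\gl_n$-module $V$ with $V(\sm) \neq 0$ embeds as a subquotient of $V_{\mathcal{C}}(T(\bar{v}))$.

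The main obstacle is ensuring that the Jordan block structure persists in the subquotient $L_2$ rather than collapsing. A priori, the generalized eigenvector $\mathcal{D}T(\bar{v}+z)$ could project to a genuine eigenvector (or to zero) in a subquotient, in which case $L_2$ would be indistinguishable from $L_1$. Ruling this out requires a careful choice of $z$ together with a detailed analysis of the composition series of $V_{\mathcal{C}}(T(\bar{v}))$ at $\sm$, precisely the type of argument worked out in the ordinary ($\mathcal{C} = \emptyset$) case in \cite{FGR5}, Theorem 5.2, which extends to the present setting with only notational adaptation.
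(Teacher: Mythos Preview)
Your proposal contains a concrete error that undermines the whole strategy. You claim that for $z$ with $\tau(z)\neq z$ the vector $\mathcal{D}T(\bar v+z)$ lies in the generalized $\sm$-eigenspace, citing Lemma~\ref{lem-ck2}. But that lemma says $(c_{kr}-\gamma_{kr}(\bar v+z))^2\mathcal{D}T(\bar v+z)=0$: the eigenvalue is $\gamma_{kr}(\bar v+z)$, not $\gamma_{kr}(\bar v)$. Hence $\mathcal{D}T(\bar v+z)\in V_{\mathcal C}(T(\bar v))(\sm_{T(\bar v+z)})$, and for $z\neq 0$ this maximal ideal is different from $\sm=\sm_{T(\bar v)}$. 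Worse, since $\bar v_{ki}=\bar v_{kj}$ we have $\tau(0)=0$ and therefore $\mathcal{D}T(\bar v)=0$; the space $V_{\mathcal C}(T(\bar v))(\sm)$ is one-dimensional, spanned by $T(\bar v)$ alone. Consequently \emph{only one} composition factor of $V_{\mathcal C}(T(\bar v))$ can have nonzero $\sm$-component, and the plan of exhibiting two non-isomorphic irreducible subquotients with $L_i(\sm)\neq 0$ cannot succeed in the form you describe.

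The Jordan-block argument is also miscast. The size-$2$ Jordan cell of $c_{kr}$ lives on the pair $\{T(\bar v+z),\mathcal{D}T(\bar v+z)\}$ inside the full module; once you pass to $W$ (which contains all ordinary tableaux) or to $\overline W=V_{\mathcal C}(T(\bar v))/W$, the action of $\Gamma$ becomes diagonal on each piece, so neither $L_1$ nor $L_2$ retains a nontrivial Jordan block. This is exactly the collapse you flag in your last paragraph, and no ``careful choice of $z$'' repairs it: it is forced by your own filtration $0\subset W\subset V_{\mathcal C}(T(\bar v))$. The route taken in \cite{FGR5}, Theorem~5.2 (to which the paper defers) is not to manufacture two explicit subquotients at $\sm$; rather one compares $V_{\mathcal C}(T(\bar v))$ with the universal Gelfand--Tsetlin module attached to $\sm$ and shows that every irreducible $V$ with $V(\sm)\neq 0$ already occurs there, so that the a~priori bound of two is in fact saturated by the subquotients of $V_{\mathcal C}(T(\bar v))$ (or else there is only one). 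Your outline needs to be rebuilt along those lines.
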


Theorem \ref{thm-exhaust} completes a classification of $(1,\mathcal{C})$-singular irreducible Gelfand-Tsetlin modules for $\gl_n$.

\appendix
\section{}\label{Section: Appendix}

For a function $f = f(v)$  by $f^{\tau}$ we denote the function $f^{\tau} (v) = f (\tau (v))$.
The following lemma can be easily verified.
\begin{lemma}\label{dif operaator in functions}
Suppose $f\in {\mathcal F}_{ij}$ and $h:=\frac{f-f^{\tau}}{[x-y]_q}$.
\begin{itemize}
\item[(i)]${\mathcal D}^{\bar{v}}(f^{\tau})=-{\mathcal D}^{\bar{v}}(f)$.
\item[(ii)] If $f=f^{\tau}$, then ${\mathcal D}^{\bar{v}}(f)=0$.
\item[(iii)] If $h\in {\mathcal F}_{ij}$, then ${\rm ev} (\bar{v})(h)=2{\mathcal D}^{\bar{v}} (f)$.
\item[(iv)] ${\rm ev} (\bar{v})(f)={\mathcal D}^{\bar{v}}([x-y]_qf)$.
\item[(v)] $\mathcal{D}^{\bar{v}} (f_1f_2 T(v+z)) = \mathcal{D}^{\bar{v}} (f_1)f_2(\bar{v}) T(\bar{v}+z) +   f_1(\bar{v}) \mathcal{D} ^{\bar{v}} (f_2T(\bar{v}+z))$ for any $f_1,f_2\in {\mathcal F}_{ij}$.
\item[(vi)] $\Dv([x-y]_qf)=\Dv([x-y]_qf^{\tau})$.
\end{itemize}
\end{lemma}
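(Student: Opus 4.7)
The plan is to unpack the definition $\mathcal{D}^{\bar{v}}(f) = \frac{q-q^{-1}}{4\ln q}\left(\frac{\partial f}{\partial x}-\frac{\partial f}{\partial y}\right)(\bar{v})$ and to exploit the key fact that the base point is $\tau$-fixed (because $\bar{v}_{ki} = \bar{v}_{kj}$), so that evaluation at $\bar{v}$ commutes with $\tau$ and the whole setup is symmetric under swapping $x$ and $y$. For \textbf{(i)}, the chain rule applied to $f^{\tau}(v) = f(\tau v)$ gives $\partial_x f^{\tau} = (\partial_y f)\circ \tau$ and $\partial_y f^{\tau} = (\partial_x f)\circ \tau$; evaluating at $\bar{v} = \tau(\bar{v})$ produces an overall sign change. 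Part \textbf{(ii)} follows at once, since $f = f^\tau$ combined with \textbf{(i)} forces $\mathcal{D}^{\bar{v}}(f)=0$.

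For \textbf{(iii)}, I would Taylor expand both numerator and denominator around $\bar{v}$: writing $x = \bar{v}_{ki} + \epsilon_1$ and $y = \bar{v}_{kj} + \epsilon_2$, one obtains $f - f^{\tau} = (\partial_x f - \partial_y f)(\bar{v})(\epsilon_1 - \epsilon_2) + O(|\epsilon|^2)$ and $[x-y]_q = \frac{2\ln q}{q - q^{-1}}(\epsilon_1 - \epsilon_2) + O((\epsilon_1 - \epsilon_2)^3)$. The hypothesis $h \in \mathcal{F}_{ij}$ guarantees that the quotient extends smoothly through $\bar{v}$ (note that $f - f^{\tau}$ vanishes identically on the locus $x = y$, which is exactly the vanishing locus of $[x-y]_q$), so the limit is legitimate and yields $\mathrm{ev}(\bar{v})(h) = \frac{q-q^{-1}}{2\ln q}(\partial_x f - \partial_y f)(\bar{v}) = 2\mathcal{D}^{\bar{v}}(f)$. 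For \textbf{(iv)}, since $[x-y]_q(\bar{v}) = 0$, the Leibniz rule kills those terms in which $\partial$ hits $f$, and using $\partial_x[x-y]_q|_{\bar{v}} = -\partial_y[x-y]_q|_{\bar{v}} = \frac{2\ln q}{q - q^{-1}}$ the prefactor cancels exactly and the result is $f(\bar{v})$.

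Part \textbf{(v)} is the ordinary Leibniz rule repackaged: differentiating gives $\mathcal{D}^{\bar{v}}(f_1 f_2) = \mathcal{D}^{\bar{v}}(f_1) f_2(\bar{v}) + f_1(\bar{v}) \mathcal{D}^{\bar{v}}(f_2)$, and combining with the defining relation $\mathcal{D}^{\bar{v}}(f T(v+z)) = \mathcal{D}^{\bar{v}}(f) T(\bar{v}+z) + f(\bar{v})\,\mathcal{D} T(\bar{v}+z)$ shows that both sides of \textbf{(v)} expand to the same three-term expression $[\mathcal{D}^{\bar{v}}(f_1) f_2(\bar{v}) + f_1(\bar{v})\mathcal{D}^{\bar{v}}(f_2)] T(\bar{v}+z) + f_1(\bar{v}) f_2(\bar{v}) \mathcal{D} T(\bar{v}+z)$. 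Finally, \textbf{(vi)} is a direct corollary of \textbf{(i)} applied to $g := [x-y]_q f$: since $[x-y]_q^{\tau} = -[x-y]_q$, one has $g^{\tau} = -[x-y]_q f^{\tau}$, and \textbf{(i)} then gives $\mathcal{D}^{\bar{v}}([x-y]_q f^{\tau}) = \mathcal{D}^{\bar{v}}([x-y]_q f)$. The only step requiring more than routine bookkeeping is the limit computation in \textbf{(iii)}, but the smoothness assumption $h \in \mathcal{F}_{ij}$ removes any analytic obstacle, so no real difficulty arises.
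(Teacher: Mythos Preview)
Your proof is correct and follows essentially the same approach as the paper, which simply states that (i)--(v) follow by definition and direct computation and derives (vi) via the one-line chain $\mathcal{D}^{\bar v}([x-y]_q f^{\tau}) = -\mathcal{D}^{\bar v}([y-x]_q f) = \mathcal{D}^{\bar v}([x-y]_q f)$. You have merely supplied the details the paper omits; the only minor slip is the parenthetical claim that $x=y$ is ``exactly'' the vanishing locus of $[x-y]_q$ (it can be larger when $q$ is a root of unity), but this does not affect the local computation at $\bar v$.
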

\begin{proof} Items (i)-(v) follows by definition and direct computation and item (vi) follows from the following:
\begin{equation}
\Dv([x-y]_qf^{\tau})=-\Dv([y-x]_qf)=\Dv([x-y]_qf).
\end{equation}
\end{proof}
\begin{lemma}\label{sym formula}
Let $f,g$, be functions in ${\mathcal F}_{ij}$ such that $g=g^{\tau}$. Then the following identity hold.
\begin{align*}
\Dv (f) \Dv([x-y]_q g)=\Dv(fg).
\end{align*}
\end{lemma}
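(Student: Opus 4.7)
The plan is to reduce both sides using the preceding Lemma \ref{dif operaator in functions}. By part (iv) of that lemma applied to $g$, we have $\Dv([x-y]_q g) = \mathrm{ev}(\bar{v})(g) = g(\bar{v})$, so the identity to prove becomes
$$\Dv(fg) = \Dv(f)\,g(\bar{v}).$$

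To establish this, I would apply a product rule for $\Dv$. Recall that $\Dv$ acts on a function $\phi\in{\mathcal F}_{ij}$ as $\frac{q-q^{-1}}{4\ln q}$ times $\left(\frac{\partial}{\partial x}-\frac{\partial}{\partial y}\right)\phi$ evaluated at $\bar{v}$. The ordinary Leibniz rule for $\partial_x$ and $\partial_y$, followed by evaluation at $\bar{v}$, therefore gives
$$\Dv(fg) \;=\; \Dv(f)\,g(\bar{v}) \;+\; f(\bar{v})\,\Dv(g),$$
which is nothing but the scalar analogue of part (v) of Lemma \ref{dif operaator in functions}. Now the hypothesis $g = g^{\tau}$ says that $g$ is invariant under swapping the $(k,i)$ and $(k,j)$ entries, so Lemma \ref{dif operaator in functions}(ii) applies and yields $\Dv(g) = 0$. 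The second term therefore drops out and we obtain $\Dv(fg) = \Dv(f)\,g(\bar{v})$, which combined with the first step is the desired identity.

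I do not expect any real obstacle here: the proof is essentially a one-line application of the product rule together with the symmetry $g=g^{\tau}$, and both ingredients are already packaged in parts (ii), (iv) and (v) of Lemma \ref{dif operaator in functions}. The only thing to be slightly careful about is the constant $\frac{q-q^{-1}}{4\ln q}$, but it appears linearly on both sides and so plays no role in the verification.
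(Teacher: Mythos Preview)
Your proof is correct and follows essentially the same route as the paper: the paper's own proof simply refers to the definition and parts (ii), (iii), (iv) of Lemma~\ref{dif operaator in functions}, and your argument unpacks exactly this, using (iv) to rewrite $\Dv([x-y]_q g)$ as $g(\bar v)$, the Leibniz rule (the scalar version of (v)) to expand $\Dv(fg)$, and (ii) to kill $\Dv(g)$.
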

\begin{proof}
It is easy to be verified by definition and Lemma \ref{dif operaator in functions} (ii), (iii), (iv).
\end{proof}

\begin{lemma} \label{dv-formulas}
Let $f_m, g_m$, $m=1, \ldots, t$, be functions such that $f_m,[x-y]_qg_m$, and $\sum_{m=1}^t f_m g_m$ are in ${\mathcal F}_{ij}$ and $g_m \notin  {\mathcal F}_{ij}$. Assume also that  $\Dv (\sum_{m=1}^t f_m g_m^{\tau})=0 $ . Then the following identities hold.
\begin{itemize}
\item[(i)] $2\sum\limits_{m=1}^t{\mathcal D}^{\bar{v}} (f_m) {\mathcal D}^{\bar{v}} ( [x-y]_q g_m) =  {\mathcal D}^{\bar{v}} \left(\sum_{m=1}^t f_m g_m\right)$.
\item[(ii)]  $2\sum\limits_{m=1}^t{\mathcal D}^{\bar{v}} (f_m) {\rm ev} (\bar{v}) ( [x-y]_q g_m) =  {\rm ev} (\bar{v})  \left(\sum_{m=1}^t f_mg_m\right)$.
\end{itemize}
\end{lemma}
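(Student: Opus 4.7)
The plan is to prove both formulas by decomposing each $g_m$ into its $\tau$-symmetric and $\tau$-antisymmetric parts, using the hypothesis to cancel the antisymmetric contribution, and then invoking Lemma \ref{sym formula} on the symmetric one.

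For (i), I would use the hypothesis to rewrite
\[
\Dv\!\left(\sum_{m} f_m g_m\right) = \Dv\!\left(\sum_{m} f_m g_m\right) + \Dv\!\left(\sum_{m} f_m g_m^{\tau}\right) = \Dv\!\left(\sum_{m} f_m (g_m + g_m^{\tau})\right).
\]
Setting $h_m := [x-y]_q g_m \in \mathcal{F}_{ij}$, the combination $g_m + g_m^{\tau} = (h_m - h_m^{\tau})/[x-y]_q$ is $\tau$-invariant and lies in $\mathcal{F}_{ij}$, since $h_m - h_m^{\tau}$ vanishes along $x = y$. Lemma \ref{sym formula} therefore applies term by term to $f_m$ and $g_m + g_m^{\tau}$, yielding
\[
\Dv\!\big(f_m(g_m + g_m^{\tau})\big) = \Dv(f_m)\,\Dv\!\big([x-y]_q (g_m + g_m^{\tau})\big).
\]
By Lemma \ref{dif operaator in functions}(vi), $\Dv([x-y]_q g_m^{\tau}) = \Dv([x-y]_q g_m)$, so the inner factor equals $2\,\Dv([x-y]_q g_m)$, and summing over $m$ gives (i).

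For (ii), I would first apply Lemma \ref{dif operaator in functions}(iv) to translate from $\ev$ to $\Dv$:
\[
\ev\!\left(\sum_{m} f_m g_m\right) = \Dv\!\left([x-y]_q\sum_{m} f_m g_m\right) = \Dv\!\left(\sum_{m} f_m\, G_m\right), \qquad G_m := [x-y]_q g_m \in \mathcal{F}_{ij},
\]
and then re-run the symmetric-decomposition argument with $G_m$ in place of $g_m$. The hypothesis translates to $\Dv(\sum_m f_m G_m^{\tau}) = 0$ via the product rule (using $[x-y]_q(\ov) = 0$ and $\Dv([x-y]_q) = 1$). Lemma \ref{sym formula}, applied to $f_m$ and the symmetric combination $G_m + G_m^{\tau}$, together with the identity $\Dv([x-y]_q F) = \ev(F)$ for $F \in \mathcal{F}_{ij}$ (another product-rule consequence of the same two facts), reduces the right-hand side to $2\sum_m \Dv(f_m)\,\ev([x-y]_q g_m)$.

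The main obstacle is bookkeeping the $\tau$-symmetry through the substitution $g_m \mapsto [x-y]_q g_m$ in (ii), since $[x-y]_q$ itself is $\tau$-antisymmetric; the translation of the hypothesis into the needed vanishing must be carried out carefully with the product rule, after which the rest of the argument parallels (i).
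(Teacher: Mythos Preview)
Your argument for (i) is correct and is essentially the paper's proof: the paper adds $\Dv(\sum_m f_m g_m^{\tau})=0$, rewrites $g_m+g_m^{\tau}$ as $(\bar g_m-\bar g_m^{\tau})/[x-y]_q$ with $\bar g_m=[x-y]_q g_m$, and then applies the product rule together with Lemma~\ref{dif operaator in functions}(ii),(iii). Your invocation of Lemma~\ref{sym formula} simply packages that product-rule step.

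For (ii), however, your translation of the hypothesis does not go through. With $G_m=[x-y]_q g_m$ one has $G_m^{\tau}=-[x-y]_q g_m^{\tau}$, hence $\sum_m f_m G_m^{\tau}=-[x-y]_q\sum_m f_m g_m^{\tau}$, and the product rule gives
\[
\Dv\Bigl(\sum_m f_m G_m^{\tau}\Bigr)
= -\Dv([x-y]_q)\,\ev\Bigl(\sum_m f_m g_m^{\tau}\Bigr)-\ev([x-y]_q)\,\Dv\Bigl(\sum_m f_m g_m^{\tau}\Bigr)
= -\,\ev\Bigl(\sum_m f_m g_m^{\tau}\Bigr).
\]
So what you actually need is $\ev(\sum_m f_m g_m^{\tau})=0$, not $\Dv(\sum_m f_m g_m^{\tau})=0$, and these are genuinely different conditions. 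For instance, take $t=2$, $f_1=x$, $f_2=-y$, $g_1=g_2=1/[x-y]_q$: then $\sum_m f_m g_m^{\tau}=-(x-y)/[x-y]_q$ is $\tau$-symmetric, so $\Dv$ of it vanishes, but its value at $\ov$ is nonzero, and one checks directly that identity (ii) fails in this case. The paper's ``similar arguments'' run into the same obstruction; in the actual applications later in the appendix the stronger condition $\sum_m f_m g_m^{\tau}=0$ holds identically as a function, which yields both $\Dv=0$ and $\ev=0$ and makes the argument work there. But as your reduction stands, the step ``the hypothesis translates to $\Dv(\sum_m f_m G_m^{\tau})=0$'' is the gap.
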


\begin{proof} Set for simplicity $\bar{g}_m = [x-y]_qg_m$. For (i) we use Lemma \ref{dif operaator in functions} and obtain
\begin{eqnarray*}
\left( \sum_{k=1}^t f_mg_m\right) & = & {\mathcal D}^{\bar{v}} \left(\sum_{m=1}^t f_mg_m+ \sum_{m=1}^t f_mg_m^{\tau}\right)\\
& = & \Dv \left(\sum_{m=1}^t f_m \frac{\bar{g}_m - (\bar{g}_m)^{\tau}}{[x-y]_q}
\right)\\
& = &\sum_{m=1}^t  \left( {\mathcal D}^{\bar{v}} (f_m) {\rm ev}(\bar{v}) \left( \frac{\overline{g}_m - (\overline{g}_m)^{\tau}}{[x-y]_q} \right) +   {\rm ev}(\bar{v}) (f_m)  {\mathcal D}^{\bar{v}} \left( \frac{\overline{g}_m - (\overline{g}_m)^{\tau}}{[x-y]_q} \right)\right)\\
& = &2 \sum_{m=1}^t  {\mathcal D}^{\bar{v}} (f_m) {\mathcal D}^{\bar{v}} (\overline{g}_m).
\end{eqnarray*}

For (ii) we use similar arguments.
\end{proof}

\subsection{Proof of Proposition \ref{compatible}}
For any set of relations $\mathcal{C}$ we set
\begin{equation}
\Phi(L,z_1,\ldots,z_m)=
\left\{
\begin{array}{cc}
1,& \text{ if }T(L+z_1+\ldots+z_t)\in  \mathcal {B}_{\mathcal{C}}(T(L)) \text{ for any } t,\\
0,& \text{ otherwise}.
\end{array}
\right.
\end{equation}
The action of generators can be expressed as follows:
\begin{equation}
\begin{split}
e_{k}(T(v))&=\sum_{\substack{1\leq j\leq k\\ \Phi(v,\delta^{kj})=1}}e_{kj}T(v+\delta^{kj}),\\
f_{k}(T(v))&=\sum_{\substack{1\leq j\leq k\\ \Phi(v,-\delta^{kj})=1}}f_{kj}T(v-\delta^{kj}).\\
\end{split}
\end{equation}

Let $g_{i_t}=e_{i_t}\text{ or } f_{i_t}$. For any product of generators $g_{i_1},\ldots,g_{i_r}$, the action on $T(v)$ can be expressed as
\begin{equation}\label{example phi}
\begin{split}
\sum_{\Phi(v,\pm\delta^{i_1j_1},\ldots,\pm\delta^{i_{r-1}j_{r-1}})=1}g_{i_rj_r}(v\pm\delta^{i_1j_1} \ldots \pm\delta^{i_rj_r})\cdots g_{i_1j_1}(v)
T(v\pm\delta^{i_1j_1} \ldots \pm\delta^{i_rj_r}).
\end{split}
\end{equation}
In the following all the sums satisfy the condition $\Phi=1$ as in \eqref{example phi}.

For part (i)
\begin{eqnarray*}
{\mathcal D}^{\bar{v}} ([x-y]_q e_{r} T(v+z))&= &\sum\limits_{s=1}^{r}{\mathcal D}^{\bar{v}}
([x-y]_q e_{rs}(v+z)T(v+z+\delta^{rs}))\\
&=&\sum\limits_{s=1}^{r}{\mathcal D}^{\bar{v}}([x-y]_qe_{rs}(v+z)){\rm ev} (\ov) T(v+z+\delta^{rs}))\\
& &+\sum\limits_{s=1}^{r}{\rm ev} (\ov) (([x-y]_q e_{rs}(v+z)){\mathcal D} T(T(v+z+\delta^{rs}))).
\end{eqnarray*}

The same formula holds for ${\mathcal D}^{\bar{v}} ([x-y]_q E_{rs} T(v+\tau(z)))$ after replacing $z$ with $\tau(z)$ on the right hand side. If $r\neq k$, then $e_{rs}(v+z)\in {\mathcal F}_{ij}$. Thus
\begin{eqnarray*}
&{\mathcal D}^{\bar{v}} ([x-y]_qe_{rs}(v+z)T(v+z+\delta^{rs}))={\rm ev} (\ov)(e_{rs}(v+z)T(v+z+\delta^{rs}))\\
&{\mathcal D}^{\bar{v}} ([x-y]_qe_{rs}(v+\tau(z))T(v+\tau(z)+\delta^{rs}))={\rm ev} (\ov)(e_{rs}(v+\tau(z)))T(v+\tau(z))+\delta^{rs}))
\end{eqnarray*}
Since $T(\ov+z+\delta^{rs})=T(\ov+\tau(z)+\delta^{rs})$,
${\rm ev} (\ov)(e_{rs}(v+z))={\rm ev} (\ov)(e_{rs}(v+\tau(z)))$,
${\mathcal D}^{\bar{v}} ([x-y]_q e_{r} T(v+z))={\mathcal D}^{\bar{v}} ([x-y]_q e_{r} T(v+\tau(z))) \text{ for } r\neq k$.

Suppose $r=k$, ${\mathcal D}^{\bar{v}} ([x-y]_qe_{rs}(v+z)T(v+z+\delta^{rs}))={\mathcal D}^{\bar{v}} ([x-y]_qe_{rs}(v+\tau(z))T(v+\tau(z)+\delta^{rs}))$ whenever $r\neq i,j$.
Now we consider the case when $s\in\{i,j\}$. $\tau(z+\sigma(\delta^{ki}))=\tau(z)+(\delta^{kj})$ then
$T(\ov+z+\delta^{ki})=T(\ov+\tau(z)+\delta^{kj})$,
${\mathcal D}T(\ov+z+\delta^{ki})=-{\mathcal D}T(\ov+\tau(z)+\delta^{kj})$.

The following equations follows from Lemma \ref{dif operaator in functions}.
\begin{align}
{\mathcal D}^{\bar{v}}([x-y]_qe_{ki}(v+z))={\mathcal D}^{\bar{v}}([x-y]_qe_{kj}(v+\tau(z))),  \\
{\rm ev} (\ov) ([x-y]_q e_{ki}(v+z)=-{\rm ev} (\ov) ([x-y]_q e_{kj}(v+\tau(z)).
\end{align}

The first statement for $e_r$ is proved. The proof of $f_r$ is similar. Since $q^{\varepsilon_r}T(v+z)\in {\mathcal F}_{ij}\otimes T(v+z)$, it is easy to see that statement (i) holds for $q^{\varepsilon_r}$.

The proof of part (ii) is similar.

\subsection{Proof of Proposition \ref{d-t-v-rep}}

Denote
\begin{align}
e_{kr}(L)=
-\frac{\prod_{s=1}^{k+1}[l_{kr}-l_{k+1,s}]_q}{\prod_{s\neq r}^{k}[l_{kr}-l_{ks}]_q},\\
f_{kr}(L)=
\frac{\prod_{s=1}^{k-1}[l_{kr}-l_{k-1,s}]_q}{\prod_{s\neq r}^{k}[l_{kr}-l_{ks}]_q},\\
h_{k}(L)=
q^{\sum_{r=1}^{k}l_{kr}-\sum_{r=1}^{k-1}l_{k-1,r}+k}.
\end{align}

\begin{proof}
Since $e_r T(\bar{v} + z)),f_r T(\bar{v} + z))\in  {\mathcal F}_{ij} \otimes {V}(T(v))$,
$q^h T(\bar{v} + z))\in  {\mathcal F}_{ij} \otimes T(\bar{v} + z))$. Therefore (i), (ii) and (iii) follow directly from Lemma \ref{dij-commute}.

\emph{ Proof of $(iv)$.} In the cases $r\neq k$, or $s\neq k$ or $r=s=k$, $|z_{ki}-z_{kj}|\geq 2$, the equality holds because of Lemma \ref{dij-commute}.\\
In the following we assume $r=s=k$, $|z_{ki}-z_{kj}|=1$. Without loos of generality we assume $z_{ki}=0$, $z_{kj}=1$.
\begin{align*}
&(e_{k}f_{k}-f_{k}e_{k})(\mathcal{D}T(\bar{v} + z))\\
=&e_{k} \mathcal{D}^{\ov}(f_{k}T(v + z))- f_{k} \mathcal{D}^{\ov}(e_{k}T(v + z))\\
=&e_{k} \mathcal{D}^{\ov}\left(\sum_{r=1}^{k} f_{kr}(v + z)T(v + z- \delta^{kr})\right)-
 f_{k} \mathcal{D}^{\ov}\left(\sum_{s=1}^{k}e_{ks}(v + z)T(v + z+\delta^{ks})\right)
\end{align*}
If $r\neq j$, then $e_kf_{kr}(v + z)T(v + z- \delta^{kr})\in  {\mathcal F}_{ij} \otimes {V}(T(v))$.
By Lemma \ref{dij-commute} one has that $e_{k} \Dv( f_{kr}(v + z)T(v + z- \delta^{kr})) =\Dv (e_{k}f_{kr}(v + z)T(v + z- \delta^{kr}))$. Similarly if If $s\neq i$, then
$f_{k} \Dv ( e_{ks}(v + z)T(v + z+ \delta^{ks})) =\Dv (f_{k}  e_{ks}(v + z)T(v + z+ \delta^{ks}))$. Thus
\begin{align*}
&(e_{k}f_{k}-f_{k}e_{k})(\mathcal{D}T(\bar{v} + z))\\
 =& \Dv (\sum_{r\neq j}e_{k} f_{kr}(v + z)T(v + z- \delta^{kr})
+e_{k} \Dv( f_{kj}(v + z)T(v + z- \delta^{kj}))\\
&- \Dv(\sum_{s\neq i}f_{k} e_{ks}(v + z)T(v + z+\delta^{ks})
-f_{k} \Dv( e_{ki}(v + z)T(v + z+ \delta^{ki})))
\end{align*}
The action of $e_kf_k-f_ke_k$ on $T(v+z)$ is as follows
\begin{align*}
(e_kf_k-f_ke_k)T(v + z)=&\sum_{r,s=1}^{k} f_{kr}(v+z)e_{ks}(v + z-\delta^{kr})T(v + z- \delta^{kr}+\delta^{ks})\\
&-\sum_{r,s=1}^{k} e_{ks}(v+z)f_{kr}(v + z+\delta^{ks})T(v + z+ \delta^{ks}-\delta^{kr})\\
=&h_k(v+z)T(v + z).
\end{align*}
one has that
$f_{kr}(v+z)e_{ks}(v + z-\delta^{kr})-e_{ks}(v+z)f_{kr}(v + z+\delta^{ks})=0$ when $r\neq s$, and
$\sum_{r=1}^{k} f_{kr}(v+z)e_{kr}(v + z-\delta^{kr})
-e_{kr}(v+z)f_{kr}(v + z+\delta^{ks})=h_k(v+z)$.
Then
\begin{align*}
&(e_{k}f_{k}-f_{k}e_{k})(\mathcal{D}T(\bar{v} + z))\\
=& \Dv\left(\sum_{r\neq j}\sum_{s\in\{i,r\}}f_{kr}(v + z) e_{ks}(v + z- \delta^{kr}) T(v + z- \delta^{kr}+\delta^{ks})\right)\\
 +& \Dv( f_{kj}(v + z)\Dv\left(\sum_{s=1}^{k}[x-y]_qe_{ks}(v + z- \delta^{kj})T(v + z- \delta^{kj}+\delta^{ks})\right)\\
 -&\Dv\left(\sum_{s\neq i}\sum_{r\in\{j,s\}} e_{ks}(v + z)f_{kr}(v + z+\delta^{ks}) T(v + z+\delta^{ks}-\delta^{kr})\right)\\
 -& \Dv( e_{ki}(v + z)\Dv\left(\sum_{r=1}^{k}[x-y]_qf_{kr}(v + z+ \delta^{ki})T(v + z+ \delta^{ki}-\delta^{kr})\right)
 \end{align*}
Now we consider the coefficients of $T(\ov + z+ \delta^{ks}-\delta^{kr})$ and
$\mathcal{D}T(\ov + z+ \delta^{ks}-\delta^{kr})$.
When $r\neq s$, $f_{kr}(v+z)e_{ks}(v + z-\delta^{kr})-e_{ks}(v+z)f_{kr}(v + z+\delta^{ks})=0$. If $r\neq i,j$, the coefficient of $T(\ov + z- \delta^{kr}+\delta^{ki})$ is $\Dv(f_{kr}(v + z) e_{ki}(v + z- \delta^{kr}))-
\Dv( e_{ki}(v + z))\Dv([x-y]_qf_{kr}(v + z+ \delta^{ki}))$. It follows from Lemma \ref{sym formula} that the coefficients of $T(\ov + z+ \delta^{ks}-\delta^{kr})$ and
$\mathcal{D}T(\ov + z+ \delta^{ks}-\delta^{kr})$ are zero.
\noindent
Similarly one has that the coefficient of $T(\ov  + z+\delta^{ks}-\delta^{kj})$ is zero when $s\neq i,j$.
By definition $\mathcal{D} T(\ov+ z- \delta^{kr}+\delta^{ki})=0$ if $r\neq i$ and $\mathcal{D} T(\ov+ z- \delta^{kj}+\delta^{ks})=0$ if $s\neq j$.
Since $T(\ov + z+ \delta^{ki}-\delta^{kj})=T(\ov + z)$,  $\D T(\ov + z+ \delta^{ki}-\delta^{kj})=\D T(\ov + z)$,
all the remaining tableaux are $T(\ov + z)$ and $\D T(\ov + z)$.

The coefficient of $T(\ov + z)$ is as follows
\begin{align*}
&\sum_{r\neq j}\Dv(f_{kr}(v + z) e_{kr}(v + z- \delta^{kr}))
-\sum_{s\neq i}\Dv( e_{ks}(v + z)f_{ks}(v + z+\delta^{ks}))\\
&+\Dv( f_{kj}(v + z))\sum_{s\in\{i,j\}}\Dv([x-y]_qe_{ks}(v + z- \delta^{kj}))\\
&-\Dv( e_{ki}(v + z))\sum_{r\in\{i,j\}}\Dv([x-y]_qf_{kj}(v + z+ \delta^{ki}))
\end{align*}
\begin{align*}
=&\Dv( e_{ki}(v + z)f_{ki}(v + z+\delta^{ki})-f_{kj}(v + z) e_{kj}(v + z- \delta^{kj}))\\
&+\Dv( f_{kj}(v + z))\Dv([x-y]_qe_{ki}(v + z- \delta^{kj}))\\
&+\Dv( f_{kj}(v + z))\Dv([x-y]_qe_{kj}(v + z- \delta^{kj}))\\
&-\Dv( e_{ki}(v + z))\Dv([x-y]_qf_{ki}(v + z+ \delta^{ki}))\\
&-\Dv( e_{ki}(v + z))\Dv([x-y]_qf_{kj}(v + z+ \delta^{ki}))
\end{align*}
\noindent
Since $ e_{ki}(v + z- \delta^{kj})= e_{kj}(v + z- \delta^{kj})^{\tau}$,
$ f_{kj}(v + z+ \delta^{ki})= f_{ki}(v + z+ \delta^{ki})^{\tau} $.
$f_{kj}(v+z)e_{ki}(v + z-\delta^{kj})-e_{ki}(v+z)f_{kj}(v + z+\delta^{ki})=0$
By lemma \ref{dv-formulas}  the coefficient of $T(\ov + z)$ is zero.

Since $\D T(\ov + z+ \delta^{ki}-\delta^{kj})=-\D T(\ov + z )$, the coefficient of $\mathcal{D}T(\ov + z)$ is

\begin{align*}
 &\sum_{r\neq j}\ev(f_{kr}(v + z) e_{kr}(v + z- \delta^{kr}) \mathcal{D} T(\ov+ z)\\
 &-\sum_{s\neq i}\ev( e_{ks}(v + z)f_{ks}(v + z+\delta^{ks})) \mathcal{D}T(\ov + z)\\
 &+ \Dv( f_{kj}(v + z)) \left({\rm ev} (\ov)([x-y]_qe_{kj}(v + z- \delta^{kj})-{\rm ev} (\ov)([x-y]_qe_{ki}(v + z- \delta^{kj})\right)\\
 &+\Dv\left( e_{ki}(v + z))\left({\rm ev} (\ov)([x-y]_qf_{kj}(v + z+ \delta^{ki})- {\rm ev} (\ov)([x-y]_qf_{ki}(v + z+ \delta^{ki})\right)\right)
 \end{align*}
 \begin{align*}
=&\ev(h_k(v+z))\\
 &+\ev( e_{ki}(v + z)f_{ki}(v + z+\delta^{ki})-f_{kj}(v + z) e_{kj}(v + z- \delta^{kj}))\\
 &+ \Dv( f_{kj}(v + z)) \left({\rm ev} (\ov)([x-y]_qe_{kj}(v + z- \delta^{kj})-{\rm ev} (\ov)([x-y]_qe_{ki}(v + z- \delta^{kj})\right)\\
 &+\Dv\left( e_{ki}(v + z))\left({\rm ev} (\ov)([x-y]_qf_{kj}(v + z+ \delta^{ki})- {\rm ev} (\ov)([x-y]_qf_{ki}(v + z+ \delta^{ki})\right)\right)
\end{align*}

By Lemma \ref{dv-formulas} one has that
\begin{align*}
 +&\ev( e_{ki}(v + z)f_{ki}(v + z+\delta^{ki})-f_{kj}(v + z) e_{kj}(v + z- \delta^{kj}))\\
 -& \Dv( f_{kj}(v + z)) {\rm ev} (\ov)([x-y]_qe_{ki}(v + z- \delta^{kj}))\\
 +&\Dv( f_{kj}(v + z) ){\rm ev} (\ov)([x-y]_qe_{kj}(v + z- \delta^{kj})\\
 -&\Dv( e_{ki}(v + z)){\rm ev} (\ov)([x-y]_qf_{ki}(v + z+ \delta^{ki}))\\
 +&\Dv( e_{ki}(v + z)){\rm ev} (\ov)([x-y]_qf_{kj}(v + z+ \delta^{ki}))
 =0
\end{align*}
Then the coefficient of $\D T(\ov+z)$ is $\ev(h_k(v+z))$. Thus
$(e_{k}f_{k}-f_{k}e_{k})(\D T(\ov + z))= h_k(\D T(\ov + z))$.

\emph{ Proof of $(v)$.} $(e_{r}^2e_{s}-(q+q^{-1})e_re_se_r+e_se_{r}^2)  (\D T(\ov + z))=0  \quad (|r-s|=1)$
If $r,s\neq k$ or $|z_{ki}-z_{kj}|\geq 2$, one has that $(e_{r}^2e_{s}-(q+q^{-1})e_re_se_r+e_se_{r}^2)  (\D T(\ov + z))=\Dv(e_{r}^2e_{s}-(q+q^{-1})e_re_se_r+e_se_{r}^2) T(v + z))=0 $. Suppose $r=k$, $s=k+1$, $|z_{ki}-z_{kj}|=1$. Without loos of generality, we assume that $z_{ki}=0,z_{kj}=1$. In the following formula we write $e_{rs}(v+z+w)$,$T(v+z+w_1+w_2+\cdots w_t)$, $\D T(v+z+w_1+\cdots,w_t)$ as $e_{rs}(w)$, $T(w_1,\ldots,w_t)$, $\D T(w_1,\ldots,w_t)$ respectively.

\begin{align*}
&(e_{k}^2e_{k+1}-(q+q^{-1})e_ke_{k+1}e_k+e_{k+1}e_{k}^2)  (\D T(\ov + z))\\
=&\sum_{r,t}\sum_{s\neq i}\Dv( e_{k+1,r}(0)e_{k,s}(\delta^{k+1,r})e_{k,t}(\delta^{k+1,r}+\delta^{k,s})
T(\delta^{k+1,r},\delta^{k,s},\delta^{k,t}))\\
&+\sum_{r,t}  \Dv(e_{k+1,r}(0)e_{k,i}(\delta^{k+1,r}))
\Dv([x-y]_qe_{k,t}(\delta^{k+1,r}+\delta^{k,i} )T(\delta^{k+1,r},\delta^{k,i},\delta^{k,t}))\\
&-(q+q^{-1}) \sum_{r,t}\sum_{s\neq i}\Dv(e_{k,s}(0)e_{k+1,r}(\delta^{k,s})e_{k,t}(\delta^{k,s}+\delta^{k+1,r})
T(\delta^{k,s},\delta^{k+1,r},\delta^{k,t}))\\
&-(q+q^{-1})\sum_{r,t} \Dv(e_{k,i}(0)e_{k+1,r}(\delta^{k,i}))
\Dv([x-y]_qe_{k,t}(\delta^{k,i}+\delta^{k+1,r})T(\delta^{k,i},\delta^{k+1,r},\delta^{k,t}))\\
&+\sum_{r,t}\sum_{s\neq i}
\Dv(e_{k,s}(0)e_{k,t}(\delta^{k,s})e_{k+1,r}(\delta^{k,s}+\delta^{k,t})T(\delta^{k,s},\delta^{k,t},\delta^{k+1,r}))\\
&+\sum_{r,t} \Dv(e_{k,i}(0))
\Dv([x-y]_qe_{k,t}(\delta^{k,s})e_{k+1,r}(\delta^{k,s}+\delta^{k,t})T(\delta^{k,s},\delta^{k,t},\delta^{k+1,r})).
\end{align*}
\noindent
Since $[e_k[e_k,e_{k+1}]]T(v)=0$, the coefficients of $T(\delta^{k,s}+\delta^{k,t}+\delta^{k+1,r})$ and $\D T(\delta^{k,s}+\delta^{k,t}+\delta^{k+1,r})$ are zero when $s,t\neq i$.

The coefficient of $T(\ov+z+\delta^{ki}+\delta^{ks}+\delta^{k+1,r})$, $s\neq i,j$ is
\begin{align*}
&\Dv (e_{k+1,r}(v+z)e_{k,s}(v+z+\delta^{k+1,r})e_{k,i}(\delta^{k+1,r}+\delta^{k,s}))\\
+&\Dv(e_{k+1,r}(v+z)e_{k,i}(v+z+\delta^{k+1,r}))\Dv([x-y]_qe_{k,s}(v+z+\delta^{k+1,r}+\delta^{k,i}))\\
-&(q+q^{-1}) \Dv( e_{k,s}(v+z)e_{k+1,r}(v+z+\delta^{k,s})e_{k,i}(v+z+\delta^{k,s}+\delta^{k+1,r}))\\
-&(q+q^{-1}) \Dv(e_{k,i}(v+z)e_{k+1,r}(v+z+\delta^{k,i}))\\
&\Dv([x-y]_qe_{k,s}(v+z+\delta^{k,i}+\delta^{k+1,r}))\\
+&\Dv( e_{k,s}(v+z)e_{k,i}(v+z+\delta^{k,s})e_{k+1,r}(v+z+\delta^{k,s}+\delta^{k,i}))\\
&\Dv(e_{k,i}(v+z))\Dv([x-y]_qe_{k,s}(v+z+\delta^{k,i})e_{k+1,r}(v+z+\delta^{k,i}+\delta^{k,s})
\end{align*}
Under the conditions $e_{k,s}(v+z+\delta^{k+1,r}+\delta^{k,i})$ is a symmetric function in $\mathcal{F}_{ij}$, so
\begin{align*}
\Dv([x-y]_qe_{k,s}(v+z+\delta^{k+1,r}+\delta^{k,i}))
&=\ev(e_{k,s}(v+z+\delta^{k+1,r}+\delta^{k,i}))),\\
\Dv( e_{k,s}(v+z+\delta^{k+1,r}+\delta^{k,i}))&=0
\end{align*}

which implies
\begin{multline*}
\Dv(e_{k+1,r}(v+z)e_{k,i}(v+z+\delta^{k+1,r}))\Dv([x-y]_qe_{k,s}(v+z+\delta^{k+1,r}+\delta^{k,i}))=\\
\Dv(e_{k+1,r}(v+z)e_{k,i}(v+z+\delta^{k+1,r}))e_{k,s}(v+z+\delta^{k+1,r}+\delta^{k,i}))
\end{multline*}

Similarly one has that
\begin{align*}
\Dv(e_{k,i}(v+z)e_{k+1,r}(v+z+\delta^{k,i}))
\Dv([x-y]_qe_{k,s}(v+z+\delta^{k,i}+\delta^{k+1,r}))\\
=\Dv(e_{k,i}(v+z)e_{k+1,r}(v+z+\delta^{k,i}))
e_{k,s}(v+z+\delta^{k,i}+\delta^{k+1,r})),\\
\Dv(e_{k,i}(v+z))\Dv([x-y]_qe_{k,s}(v+z+\delta^{k,i})e_{k+1,r}(v+z+\delta^{k,i}+\delta^{k,s})\\
=\Dv(e_{k,i}(v+z)) e_{k,s}(v+z+\delta^{k,i})e_{k+1,r}(v+z+\delta^{k,i}+\delta^{k,s}).
\end{align*}

Thus the coefficient of $T(\ov+z+\delta^{ki}+\delta^{ks}+\delta^{k+1,r})$ is zero.

The coefficient of $T(\ov+z+2\delta^{ki} +\delta^{k+1,r})$ is as follows
\begin{align*}
&\Dv( e_{k+1,r}(v+z)e_{k,j}(v+z+\delta^{k+1,r})e_{k,i}(v+z+\delta^{k+1,r}+\delta^{k,j})\\
+&\Dv(e_{k+1,r}(v+z)e_{k,i}(v+z+\delta^{k+1,r}))\Dv([x-y]_qe_{k,i}(v+z+\delta^{k+1,r}+\delta^{k,i}))\\
+&\Dv(e_{k+1,r}(v+z)e_{k,i}(v+z+\delta^{k+1,r}))\Dv([x-y]_qe_{k,j}(v+z+\delta^{k+1,r}+\delta^{k,i}))\\
-&(q+q^{-1}) \Dv(e_{k,j}(v+z)e_{k+1,r}(v+z+\delta^{k,j})e_{k,i}(v+z+\delta^{k,j}+\delta^{k+1,r})\\
-&(q+q^{-1}) \Dv(e_{k,i}(v+z)e_{k+1,r}(v+z+\delta^{k,i}))\Dv([x-y]_qe_{k,i}(v+z+\delta^{k,i}+\delta^{k+1,r}))\\
-&(q+q^{-1}) \Dv(e_{k,i}(v+z)e_{k+1,r}(v+z+\delta^{k,i}))\Dv([x-y]_qe_{k,j}(v+z+\delta^{k,i}+\delta^{k+1,r}))\\
+&\Dv(e_{k,j}(v+z)e_{k,i}(v+z+\delta^{k,j})e_{k+1,r}(v+z+\delta^{k,j}+\delta^{k,i})\\
+&\Dv(e_{k,i}(v+z))\Dv([x-y]_qe_{k,i}(v+z+\delta^{k,i})e_{k+1,r}(v+z+2\delta^{k,i} )\\
+&\Dv(e_{k,i}(v+z))\Dv([x-y]_qe_{k,j}(v+z+\delta^{k,i})e_{k+1,r}(v+z+\delta^{k,i}+\delta^{k,j} ).
\end{align*}
It follows from Lemma \ref{dv-formulas} that

\begin{align*}
&\Dv(e_{k+1,r}(v+z)e_{k,i}(v+z+\delta^{k+1,r}))\Dv([x-y]_qe_{k,i}(v+z+\delta^{k+1,r}+\delta^{k,i}))\\
+&\Dv(e_{k+1,r}(v+z)e_{k,i}(v+z+\delta^{k+1,r}))\Dv([x-y]_qe_{k,j}(v+z+\delta^{k+1,r}+\delta^{k,i}))\\
-&(q+q^{-1}) \Dv(e_{k,i}(v+z)e_{k+1,r}(v+z+\delta^{k,i}))\Dv([x-y]_qe_{k,i}(v+z+\delta^{k,i}+\delta^{k+1,r}))\\
-&(q+q^{-1}) \Dv(e_{k,i}(v+z)e_{k+1,r}(v+z+\delta^{k,i}))\Dv([x-y]_qe_{k,j}(v+z+\delta^{k,i}+\delta^{k+1,r}))\\
+&\Dv(e_{k,i}(v+z))\Dv([x-y]_qe_{k,i}(v+z+\delta^{k,i})e_{k+1,r}(v+z+2\delta^{k,i} )\\
+&\Dv(e_{k,i}(v+z))\Dv([x-y]_qe_{k,j}(v+z+\delta^{k,i})e_{k+1,r}(v+z+\delta^{k,i}+\delta^{k,j} )\\
=&\Dv ( e_{k+1,r}(v+z)e_{k,i}(v+z+\delta^{k+1,r}))e_{k,i}(v+z+\delta^{k+1,r}+\delta^{k,i}) \\
-&(q+q^{-1})e_{k,i}(v+z)e_{k+1,r}(v+z+\delta^{k,i}))e_{k,i}(v+z+\delta^{k,i}+\delta^{k+1,r}))\\
+& (e_{k,i}(v+z))e_{k,i}(v+z+\delta^{k,i})e_{k+1,r}(v+z+2\delta^{k,i})).
\end{align*}
Then the coefficient of $T(\ov+z+2\delta^{ki} +\delta^{k+1,r})$ is zero.
Similarly one has that the coefficients of $\D T(\ov+z+ \delta^{ki} +\delta^{ks}+\delta^{k+1,r}),s\neq i,j$ and
$\D T(\ov+z+2\delta^{ki} +\delta^{k+1,r})$  are zero.
Thus $(e_{k}^2e_{k+1}-(q+q^{-1})e_ke_{k+1}e_k+e_{k+1}e_{k}^2)  (\D T(\ov + z))=0$.

\end{proof}


\begin{thebibliography}{20}
\bibitem{CE1} M. Colarusso, S.Evens, On algebraic integrability of Gelfand-Zeitlin fields, Transform.
Groups 15 (2010), no. 1, 46--71.

\bibitem{CE2} M. Colarusso, S.Evens, The Gelfand-Zeitlin integrable system and K-orbits on the
flag variety, in: Symmetry: Representation Theory and its Applications:, Progress
Math. Birkauser, Boston, 2014, 36 pages.
\bibitem{DFO} Y. Drozd, S. Ovsienko, V. Futorny,  Harish-Chandra subalgebras and Gelfand-Zetlin modules, {Math. and Phys. Sci.} {424} (1994), 72--89.

\bibitem{FJMM} B. Feigin, M. Jimbo, T. Miwa, E. Mukhin,
    Quantum toroidal $\mathfrak{gl}_1$-algebra: plane partitions, {Kyoto J.Math.} {52} (2012), 621--659.



\bibitem{FM} T. Fomenko, A. Mischenko, Euler equation on finite-dimensional Lie
groups, {Izv. Akad. Nauk SSSR, Ser. Mat.} {42} (1978), 396--415.

\bibitem{FO1} V. Futorny, S. Ovsienko,
Galois orders in skew monoid rings, {J. Algebra}, {324} (2010), 598--630

\bibitem{FO2} V. Futorny, S. Ovsienko, Fibers of characters in Gelfand-Tsetlin categories,  {Trans. Amer. Math. Soc.} {366} (2014), 4173--4208.


\bibitem{FGR1} V. Futorny, D. Grantcharov, L. E. Ramirez, Classification of irreducible Gelfand-Tsetlin modules for $\mathfrak{sl}(3)$. In progress.

\bibitem{FGR2} V. Futorny, D. Grantcharov, L. E. Ramirez, Irreducible Generic Gelfand-Tsetlin Modules of $\mathfrak{gl} (n)$. SIGMA 11 (2015) 018, 13 pp.

\bibitem{FGR3} V. Futorny, D. Grantcharov, L.E. Ramirez, Singular Gelfand-Tsetlin modules for $\mathfrak{gl}(n)$.  Adv. Math., 290, (2016), 453--482.

\bibitem{FGR4}  V. Futorny, D. Grantcharov, L.E. Ramirez,, New singular Gelfand-Tsetlin $gl(n)$-modules of index $2$,  to appear in Comm. Math. Phys.

\bibitem{FGR5}  V. Futorny, D. Grantcharov, L.E. Ramirez, Drinfeld category and the classification of singular  Gelfand-Tsetlin $\displaystyle \mathfrak{gl}_n$-modules, to apear in IMRN.

\bibitem{FH} V.Futorny, J.Hartwig, De Concini-Kas filtration and Gelfand-Tsetlin characters for quantum $gl(n)$, arXiv:1203.4793 (2012).



\bibitem{FRT} L. Faddeev, N. Reshetikhin and L. Takhtadzhyan, Quantization of Lie groups and Lie algebras,
Leningrad Math. J. 1 (1990), 193--225.



\bibitem{FRZ}
V. Futorny, L. E. Ramirez, J. Zhang, Combinatorial construction of Gelfand-Tsetlin modules for $\gl_n$, arXiv:1611.07908 (2016).

\bibitem{FRZ1}
V. Futorny, L. E. Ramirez, J. Zhang, Irreducible subquotients of generic Gelfand-Tsetlin modules over
$U_q(\gl_n)$,  arXiv:1610.08126 (2016).

\bibitem{FRZ2}
V. Futorny, L. E. Ramirez, J. Zhang, Explicit construction of Gelfand-Tsetlin modules for $U_q(\gl_n)$,  arXiv:1611.07908 (2017).

\bibitem{GG} I. Gelfand, M. Graev, Finite-dimensional irreducible representations of the unitary and complete linear group and special functions associated with them. (Russian) {Izv. Akad. Nauk SSSR Ser. Mat.} {29} (1965), 1329--1356.

\bibitem{GS}V. Guillemin and S. Sternberg, The Gelfand-Tsetlin system and quantization of the complex
flag manifolds, J. Funct. Anal. 52 (1983), no. 1, 106-128.

\bibitem{GT} I. Gelfand, M. Tsetlin, Finite-dimensional representations of the group of unimodular matrices, {Doklady Akad. Nauk SSSR (N.s.)}, {71} (1950), 825--828.

\bibitem{Gr1} M. Graev, Infinite-dimensional representations of the Lie algebra $\gl(n, \mathbb{C})$
related to complex analogs of the Gelfand-Tsetlin patterns and
general hypergeometric functions on the Lie group $GL(n,
\mathbb{C})$, {Acta Appl. Mathematicae}, {81} (2004), 93--120.

\bibitem{Gr2} M. Graev, A continuous analogue of Gelfand-Tsetlin schemes and a realization of the principal series of irreducible unitary representations of the group $GL(n,\mathbb{C})$ in the space of functions on the manifold of these schemes, {Dokl. Akad. Nauk.} {412 no.2} (2007), 154--158.

\bibitem{KW1}  B. Kostant,  N. Wallach, Gelfand-Zeitlin theory from the perspective of
classical mechanics I, In Studies in Lie Theory Dedicated
to A. Joseph on his Sixtieth Birthday, {Progress in
Mathematics},  {243} (2006),   319--364.


\bibitem{KW2}  B. Kostant,  N. Wallach, Gelfand-Zeitlin theory from the perspective of
classical mechanics II. In The Unity of Mathematics In
Honor of the Ninetieth Birthday of I. M. Gelfand, {Progress
in Mathematics},  {244} (2006), 387--420.


\bibitem{LP1}  F. Lemire, J. Patera, Formal analytic continuation of Gelfand finite-dimensional representations  of $gl(n,\mathbb{C})$, {J. Math. Phys.} {20} (1979), 820--829.


\bibitem{LP2} F. Lemire, J. Patera, Gelfand representations of $sl(n, \mathbb{C})$,{ Algebras Groups Geom.} {2} (1985), 14--166.






\bibitem{Maz1} V. Mazorchuk, Tableaux realization of generalized Verma modules, {Can. J. Math.}  {50} (1998), 816--828.


\bibitem{Maz2} V. Mazorchuk, On categories of Gelfand-Zetlin modules, {Noncommutative Structures in Mathematics and Physics}, Kluwer Acad. Publ, Dordrecht (2001),  299--307.

\bibitem{MT} V. Mazorchuk, L. Turowska, On Gelfand-Tsetlin modules over $U_{q}(\mathfrak{gl}(n))$, {Czechoslovak Journal of Physics}, (2000), 139--141.

\bibitem{m:gtsb} A. Molev,  Gelfand-Tsetlin bases for classical Lie algebras, {Handbook of Algebra, Vol. 4, (M. Hazewinkel, Ed.), Elsevier,} (2006), 109--170.



\bibitem{O} S. Ovsienko, Finiteness
statements for Gelfand-Zetlin modules, {Third International Algebraic Conference in the Ukraine (Ukrainian)},  Natsional. Akad. Nauk Ukrainy, Inst. Mat., Kiev, (2002), 323--338.

\bibitem{RZ} L. E. Ramirez,  P. Zadunaisky, Gelfand-Tsetlin modules over $\mathfrak{gl}(n,\mathbb{C})$ with arbitrary characters. arXiv:1705.10731(2017).

\bibitem{UST} K. Ueno, Y. Shibukawa, T. Takebayashi, Gelfand-Zetlin basis for $U_q(gl_{n+1})$ modules. Letters in Mathematical Physics 18.3 (1989): 215-221.
\bibitem{UST2} K. Ueno, Y. Shibukawa, T. Takebayashi, Construction of Gelfand-Tsetlin Basis for $Uq(\gl(N+1))$-modules. Publ. RIMS, Kyoto Univ.
26 (1990), 667-679.


\bibitem{Vi1} E. Vishnyakova, Geometric approach to $1$-singular Gelfand-Tsetlin $\mathfrak{gl}_n$-modules, arXiv:1704.00170 (2017).
\bibitem{Vi2} E. Vishnyakova, Geometric approach to $p$-singular Gelfand-Tsetlin $\mathfrak{gl}_n$-modules. arxiv:1705.05793 (2017).

\bibitem{Za} P. Zadunaisky, A new way to construct $1$-singular Gelfand-Tsetlin modules, Alg. and Discrete Math.  Vol 23, No 1 (2017).
\end{thebibliography}
\end{document}